\newtheorem{thm}{Theorem}
\newtheorem{lemma}{Lemma}
\newtheorem{sled}{Corollary}
\newtheorem{utv}{Claim}
\theoremstyle{definition}
\newtheorem{opr}{Definition}
\theoremstyle{remark}
\DeclareMathOperator\LIP{LIP}
\DeclareMathOperator\KR{KR}
\DeclareMathOperator\sign{sign}
\DeclareMathOperator\ROOT{Root}
\def\odin{\mathbf{1}}
\let\leq\leqslant
\let\geq\geqslant
\title{\textbf{Combinatorics of the Lipschitz polytope}}
\author{J.~Gordon$^b$, F.~Petrov$^{a,b}$}
\date{27.01.16}
\begin{document}
\maketitle

\begin{abstract}
Let  $\rho$ be a metric on the set
$X=\{1,2,\dots,n+1\}$.
Consider the $n$-dimensional
polytope of functions $f:X\rightarrow \mathbb{R}$, which satisfy
the conditions $f(n+1)=0$, $|f(x)-f(y)|\leq \rho(x,y)$. 
The question on classifying metrics depending on the combinatorics of this polytope have been recently posed by A.~M.~Vershik \cite{V}.
We prove that for any ``generic'' metric
the number of $(n-m)$-dimensional faces, $0\leq m\leq n$, equals
 $\binom{n+m}{m,m,n-m}=(n+m)!/m!m!(n-m)!$. This fact
is intimately related to regular triangulations
of the root polytope (convex hull of the roots of 
$A_n$ root system). Also we get two-sided estimates 
for the logarithm of the number of Vershik classes of metrics:
$n^3\log n$ from above and 
$n^2$ from below.
\end{abstract}
\section{Introduction}

{\let\thefootnote\relax\footnote{
a) St. Petersburg Department of
V.A. Steklov Institute of Mathematics of
the Russian Academy of Sciences;
b) St. Petersburg State University.
E-mails: fedyapetrov@gmail.com, joseph-gordon@yandex.ru.
Supported by Russian Scientific Foundation grant 14-11-00581.
}

Let \((X,\rho)\) be a finite metric space with \(|X|=n+1\). 
A metric $\rho$ will be called \emph{strict}, if $\rho(x,z)<\rho(x,y)+\rho(y,z)$
for $y\in X\setminus \{x,z\}$.
On the space of functions from $X$
\[
\tilde{F}:=\left\{ f:X\rightarrow \mathbb{R}\right\}
\]
we define a map, which maps every function to its Lipcshitz constant:
\[
\|f\|:=\max_{x,y\in X}{\frac{f(y)-f(x)}{\rho(x,y)}}
\]
It is a seminorm, becoming a norm if all functions differing by 
a constant are identified --- or,
equivalently, if the value is fixed at one point:
\[
F:=\left\{f\in\tilde{F} \mid f(x_0)=0\right\}, x_0\in X.
\]
The closed unit ball of the norm $\|\cdot\|$ on the space $F$ is a convex $n$-dimensional polytope,
which will be denoted by $\LIP(X)$. We will interpret the dual normed space $F^*$ as a space of
signed measures $\mu$ on the metric space $(X,\rho)$ with the total measure of 0, the pairing $\langle\mu,f\rangle$ of a function $f$ and a 
signed measure $\mu$ is $\int fd\mu$ (the value doesn't change 
when we add a constant to $f$, thus it is well defined.) The delta-measure at a point $x\in X$ 
will be denoted by $\delta_x$, and then the signed measure $\mu \in F^*$ 
has the following form: 
\begin{align*}
\mu=\sum_{x\in X} c_x\delta_x,\, \sum c_x=0;\\ \langle\mu,f\rangle=
\sum_{x\in X} c_xf(x),\,\|\mu\|=\max_{f\in \LIP(X)} \langle\mu,f\rangle.
\end{align*}
This norm of a signed measure $\mu\in F^*$ is 
called \emph{Kantorovich--Rubinstein norm}. It is equal to the Kantorovich optimal
transportation  distance
between the measures $\mu_+$ and $\mu_-$ coming 
from the Hahn decomposition of $\mu=\mu_+-\mu_-$.

The convex hull of the set of points of the form 
$e_{x,y}:=\frac{\delta(x)-\delta(y)}{\rho(x,y)},x,y\in X$ serves as a dual polytope to $\LIP(X)$, i.e.,
the
unit ball $\KR(X)$ of the Kantorovich--Rubinstein space $F^*$: indeed, the norm of a function
$f$, by definition, satisfies 
$$
\|f\|=\sup_{x,y\in X} \langle e_{x,y}, f\rangle.
$$
A.~M.~Vershik posed a question about combinatorial structure of polytopes 
$\KR(X)$ (equivalently, about combinatorial structure of $\LIP(X)$.)

For the special metric $\rho(x,y)=:\odin(x,y)=1$ at $x\ne y$, the
polytope $\KR(X)$ is called \emph{root polytope}, as its vertices 
are precisely the roots of the $A_n$ root system. 
This polytope is helpful in the study of the general case, cf. section \ref{unimod}.

We compute the $f$-vectors of this polytope $\KR(X)$ 
for a generic metric.  
Furthermore, we estimate from both sides 
number of types of metrics on the set ${\{1,\dots,n+1\}}$, classified by 
the combinatorial type of the (naturally labelled)
polytope $\KR$. 

\begin{opr}  We say that a
metric $\rho$ is \emph{generic}, if $\rho$ is strict 
and the polytope $\KR(X)$ is simplicial
(or, equivalently, the polytope $\LIP(X)$ is simple).
\end{opr}

Apparently, in the generic situation $f$-vector of the polytope $\LIP$
does not depend on the metric:

\begin{thm}\label{f}
Let $X$, $|X|=n+1$ be a metric space with generic metric $\rho$.
Then for $0\leqslant m\leqslant n$ the number of \((n-m)\)-dimensional faces 
of the polytope $\LIP(X)$ 
is equal to \(\binom{n+m}{m,m,n-m}=\frac{(n+m)!}{m!m!(n-m)!}\).
\end{thm}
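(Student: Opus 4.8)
The plan is to pass to the polar dual and translate everything into the combinatorics of the root polytope. By polar duality between $\LIP(X)$ and $\KR(X)$, an $(n-m)$-dimensional face of $\LIP(X)$ corresponds to an $(m-1)$-dimensional face of $\KR(X)$, and since $\rho$ is generic the latter polytope is simplicial; hence it suffices to count, for each $m$, the $m$-element subsets of vertices $\{e_{x,y}\}$ that span a face. First I would record the effect of strictness: if $f\in\LIP(X)$ makes both $f(x)-f(y)=\rho(x,y)$ and $f(y)-f(z)=\rho(y,z)$ tight with $x,y,z$ distinct, then $f(x)-f(z)=\rho(x,y)+\rho(y,z)>\rho(x,z)$, violating $f\in\LIP(X)$. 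Thus at any $f$ the set of tight ordered pairs, viewed as directed edges on $X$, has no vertex that is simultaneously a head and a tail; equivalently each tight configuration is a bipartite (source/sink) oriented graph. A short genericity argument then shows the underlying graph must be a forest, since a cycle would impose a nontrivial linear relation among the values $\rho(\cdot,\cdot)$, excluded for generic $\rho$; and a forest with $m$ edges yields $m$ linearly independent roots $\delta_x-\delta_y$, cutting out a face of $\LIP(X)$ of codimension $m$. This identifies the $(m-1)$-faces of $\KR(X)$ with the oriented forests that actually occur as tight sets, and fixes the dimension bookkeeping.

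Next I would realize these forests as the cells of a triangulation of the root polytope $R:=\operatorname{conv}\{\delta_x-\delta_y\}$. Writing $\LIP(X)=\{f:\langle\delta_x-\delta_y,f\rangle\le\rho(x,y)\}$ exhibits $\LIP(X)$ as the polar dual of $\KR(X)=\operatorname{conv}\{(\delta_x-\delta_y)/\rho(x,y)\}$, exactly the configuration governed by regular subdivisions. Lifting each root $\delta_x-\delta_y$ to height $\rho(x,y)$ and the origin (the unique interior lattice point) to height $0$, the lower hull projects to a regular subdivision $\mathcal T$ of $R$; because $\rho$ is generic this is a triangulation, and because the origin is the strictly lowest lifted point and lies in the interior, every maximal cell contains it, so $\mathcal T=0*\mathcal T'$ is the cone from the origin over a triangulation $\mathcal T'$ of the boundary sphere $\partial R$. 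The maximal cells are $\operatorname{conv}(0,\,\delta_x-\delta_y:(x,y)\in T)$ for the realizable spanning trees $T$; since the roots along a spanning tree form a $\mathbb Z$-basis of the root lattice, each such simplex is unimodular, and $\mathcal T'$ is precisely the boundary complex of $\KR(X)$. Thus $f_{m-1}(\KR(X))=f_{m-1}(\mathcal T')$ for all $m$.

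Finally I would prove the metric-independence and evaluate the numbers. Because $\mathcal T$ is a unimodular lattice triangulation and the only lattice points of $R$ are $0$ and the roots (so all of them are used), Stanley's theorem gives $h(\mathcal T)=h^*(R)$, the Ehrhart $\delta$-vector of $R$, which is intrinsic to $R$ and hence independent of the generic metric $\rho$. Coning preserves $h$-vectors, so $h(\mathcal T')=h(\mathcal T)=h^*(R)$; as $\mathcal T'$ triangulates a sphere, Dehn--Sommerville recovers its full $f$-vector from $h(\mathcal T')$, proving that $f(\LIP(X))$ does not depend on the generic metric. It then remains to insert the known value $h^*_i(R)=\binom{n}{i}^2$ (whose sum $\sum_i\binom ni^2=\binom{2n}{n}$ is the normalized volume of $R$) into $f_{m-1}=\sum_{i=0}^m\binom{n-i}{m-i}h_i$ and to simplify the Vandermonde-type sum $\sum_{i=0}^m\binom{n-i}{m-i}\binom ni^2=\binom{n+m}{m}\binom nm=\binom{n+m}{m,m,n-m}$.

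The main obstacle is the invariance step: a priori distinct generic metrics produce triangulations $\mathcal T'$ that need not be combinatorially isomorphic, so one cannot compare their $f$-vectors directly. The leverage is that their $h$-vectors are nonetheless forced to coincide through the Ehrhart invariant $h^*(R)$ via Stanley's theorem — this is where the unimodularity of the root polytope and the precise description of its lattice points are indispensable — after which Dehn--Sommerville converts the common $h$-vector into a common $f$-vector. Verifying the hypotheses (that the lift yields an honest central unimodular triangulation whose boundary is the face complex of $\KR(X)$, and that $R$ contains no lattice points besides $0$ and the roots) is the technical heart; the concluding binomial identity is then routine.
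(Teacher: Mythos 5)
Your argument is correct, but it is not the proof given in the body of the paper: it is, in essence, the alternative route the authors themselves sketch in Section~\ref{unimod}, where they observe that the central projection of the boundary of $\KR(X)$ onto the boundary of $\ROOT(X)$ produces a regular unimodular triangulation of the root polytope, and that Stanley's invariance theorem \cite{S} together with the computation of \cite{ABHP} ``gives another proof of Theorem~\ref{f}''. Your write-up supplies the details that sketch omits: the lift of $\delta_x-\delta_y$ to height $\rho(x,y)$ and the identification of the lower hull with the graph of the Kantorovich--Rubinstein norm, the fact that the only lattice points of $\ROOT(X)$ are the origin and the roots, the invariance of the $h$-vector under coning, and the closing identity $\sum_{i=0}^m\binom{n-i}{m-i}\binom{n}{i}^2=\binom{n+m}{m}\binom{n}{m}$. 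The paper's actual proof is entirely different and self-contained: it establishes the finer Theorem~\ref{general_m} --- for every prescribed sequence of outdegrees $p_1,\dots,p_{n+1}$ summing to $m$ there are exactly $\binom{n}{m}$ admissible graphs --- first for the special metric $\rho(i,j)=1+i/j$ via the rearrangement inequality, and then for an arbitrary generic metric by deforming along a segment in the space of metrics and checking that the count with fixed outdegrees is preserved each time an ``exceptional plane'' is crossed; Theorem~\ref{f} then follows by summing over the $\binom{n+m}{m}$ outdegree sequences. Your approach buys brevity and conceptual clarity --- metric-independence of the $f$-vector comes for free from Ehrhart invariance --- at the price of two nontrivial external inputs (Stanley's theorem and the value $h^*_i=\binom{n}{i}^2$ for the root polytope); the paper's approach buys self-containedness and strictly more information, namely the refined face count by outdegree sequence, which the $h^*$-method cannot see.

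Two small blemishes, neither fatal. Recovering the $f$-vector from the $h$-vector is just the defining (invertible) linear relation between the two, not Dehn--Sommerville, which is the symmetry $h_i=h_{n-i}$ and is not needed here. Also, the forest property of tight edge sets is cleanest to deduce from simpliciality via the dimension count (Lemma~\ref{dim} and Corollary~\ref{cor_cyclic}); your phrasing ``a cycle would impose a linear relation among distances, excluded for generic $\rho$'' assumes on its face more than the paper's definition of generic, and the precise justification is the uniqueness of the minimizing matching in Theorem~\ref{generic_metric}.
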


Associate an oriented edge from $x$ to $y$ to a point $e_{x,y}$.  
Thereby
any face $\alpha$ (of arbitrary dimension) 
of the polytope $\KR(X)$ is associated to the graph
$D(\alpha)$ on the vertex set $X$ with edges corresponding to
signed measures $e_{x,y}$ lying on the face $\alpha$. 
Let $\tilde{D}(\alpha)$ denote the same graph $D(\alpha)$
with orientation forbidden.

\begin{opr} The collection of graphs of the form 
$D(\alpha)$ is called a \emph{combinatorial structure} of a dual pair 
of polytopes $\LIP(X)$, $\KR(X)$.
An oriented (unoriented, respectively) graph $G$ on the vertex set $X$ is called 
\emph{admissible}, 
if all its edges belong to some graph of the form $D(\alpha)$ ($\tilde{D}(\alpha)$, respectively.)
Two metrics $\rho_1,\rho_2$ on the same set $X$ are called
\emph{Lipschitz combinatorially  equivalent}, 
if the combinatorial structures of respective polytopes coincide. 
\end{opr}

The following theorem partially answers the question of A.~M.~Vershik \cite{V}. 
\begin{thm}\label{types} Suppose $|X|=n+1$.
The number $V(n)$, $V_g(n)$ of types of Lipschitz equivalence of, respectively, all metrics and generic metrics on the set $X$
satisfies the inequality
$$
c_1 n^2\leqslant \log V_g (n)\leq \log V(n)\leqslant c_2 n^3\log (n+1)
$$
for some positive absolute constants $c_1,c_2$.
\end{thm}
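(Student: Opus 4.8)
The plan is to encode the entire combinatorial structure in the sign vector of a family of linear forms in the lengths $\rho(x,y)$, and then to count sign patterns for the upper bound and construct them for the lower bound. First I would note that the faces of $\KR(X)$ together with their labels — hence all the graphs $D(\alpha)$ — are determined by the signs of the homogenised determinants
$$
\Delta(S)=\det\begin{pmatrix} e_{x_1,y_1}&\cdots&e_{x_{n+1},y_{n+1}}\\ 1&\cdots&1\end{pmatrix},\qquad S=\{(x_1,y_1),\dots,(x_{n+1},y_{n+1})\}.
$$
Scaling the $i$-th column by $\rho(x_i,y_i)>0$ leaves the sign unchanged, turns the top $n$ rows into the integer roots $\delta_{x_i}-\delta_{y_i}$ and the bottom row into $(\rho(x_1,y_1),\dots,\rho(x_{n+1},y_{n+1}))$. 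Expanding along the bottom row, $P(S):=\bigl(\prod_i\rho(x_i,y_i)\bigr)\Delta(S)$ is a linear form in the $\binom{n+1}{2}$ variables $\rho(x,y)$: the coefficient of $\rho(x_i,y_i)$ is the determinant of $\{\delta_{x_j}-\delta_{y_j}:j\neq i\}$, equal to $\pm1$ if these edges form a spanning tree and $0$ otherwise (unimodularity of $A_n$, cf.\ section \ref{unimod}). Hence $P(S)\equiv0$ unless $S$ is connected and unicyclic, in which case $P(S)$ is the alternating sum of the lengths along the unique cycle. Thus the combinatorial type is a function of the sign vector of $m\le\binom{n(n+1)}{n+1}$ linear forms in $k=\binom{n+1}{2}$ variables.

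For the upper bound I would use that an arrangement of $m$ hyperplanes in $\mathbb R^{k}$ has at most $(Cm)^{k}$ faces of all dimensions, hence at most that many sign patterns in $\{+,0,-\}^{m}$. With $\log m\le (n+1)\log(en)=O(n\log n)$ and $k=\binom{n+1}{2}=\Theta(n^{2})$ this yields $\log V(n)\le k\log(Cm)=O(n^{3}\log n)\le c_2\,n^{3}\log(n+1)$; passing to the subcone of genuine metrics only lowers the count. The middle inequality $\log V_g(n)\le\log V(n)$ is immediate, since generic metrics form a subclass.

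For the lower bound I would exhibit $\Omega(n^{2})$ signs that switch independently. Fix $1,2\in X$ and, for $3\le i<j\le n+1$, take
$$
q_{ij}:=\rho(i,j)+\rho(1,2)-\rho(1,i)-\rho(2,j),
$$
which is $\pm P(S)$ for the alternating $4$-cycle on $\{1,2,i,j\}$. The length $\rho(i,j)$ occurs in $q_{ij}$ and in no other $q_{i'j'}$, so the $\binom{n-1}{2}$ forms are linearly independent. Now set $\rho(x,y)=2+\varepsilon_{xy}$: every sufficiently small $\varepsilon$ gives a strict metric, and there $q_{ij}=\varepsilon_{ij}+\varepsilon_{12}-\varepsilon_{1i}-\varepsilon_{2j}$. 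As these are independent linear forms in $\varepsilon$, each sign pattern $\sigma\in\{\pm\}^{\binom{n-1}{2}}$ is realised on a nonempty open set, inside which metrics with simplicial $\KR$ are dense; pick a generic one. Distinct patterns yield distinct structures, so $\log V_g(n)\ge\binom{n-1}{2}\log 2\ge c_1 n^{2}$.

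The main obstacle is exactly this last injectivity: I must show that flipping a single $\sign q_{ij}$ changes at least one labelled graph $D(\alpha)$, and that the flips do not cancel one another. I expect to settle it by identifying $\sign q_{ij}$ with a combinatorial invariant of the four-point sub-configuration $e_{12},e_{1i},e_{2j},e_{ij}$: the two signs are precisely the two triangulations of this quadrilateral (a bistellar flip), so the set of faces $D(\alpha)$ spanned by these four vertices differs between the two cases. Since the cycles for distinct $(i,j)$ involve distinct edges $\{i,j\}$, the corresponding flips are supported on different quadrilaterals and act independently, giving the required $2^{\binom{n-1}{2}}$ structures.
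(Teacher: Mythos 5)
Your proposal is correct; the upper bound follows essentially the paper's own route, while your lower bound is genuinely different. For the upper bound, both arguments reduce the count to the number of sign patterns of $\exp(O(n\log n))$ linear forms on the $\binom{n+1}{2}$-dimensional space of metrics and then invoke the cell bound for hyperplane arrangements; moreover, after your unimodular expansion the nonzero forms $P(S)$ are exactly signed cycle sums, i.e.\ in essence the paper's ``exceptional planes''. The only real difference is the justification that the signs determine the combinatorial type: the paper derives this directly from condition (ii) of Theorem \ref{cyclic_thm} (same signs give the same admissible graphs, hence the same facets, hence the same faces, which are intersections of facets), while you appeal to the standard but nontrivial oriented-matroid fact that the chirotope of a labelled point configuration determines its labelled face lattice. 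The lower bounds genuinely differ: the paper takes the cube of $2^{n(n+1)/2}$ metrics $3\pm f(x,y)$ with rationally independent values $f(x,y)$ and shows every equivalence class inside this cube has size $2^{o(n^2)}$, because two equivalent metrics must differ on a $C_4$-free set of pairs, which has at most $(n+1)^{3/2}$ elements by Reiman's theorem; you instead use only the $\binom{n-1}{2}$ quadrilaterals through the two hub points $1,2$, whose forms $q_{ij}$ are linearly independent, so that every sign pattern is realized on a nonempty open set and hence by a generic metric. Your version is more constructive and avoids extremal graph theory; the paper's version yields a marginally larger count and the extra information that classes within the cube are small.

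Finally, the ``main obstacle'' you flag is not a gap at all, and no bistellar-flip or non-cancellation argument is needed: by condition (iv) of Theorem \ref{cyclic_thm} with $k=2$, the two-edge graph $\{(i,j),(2,1)\}$ is admissible iff $q_{ij}\leq 0$, and $\{(i,1),(2,j)\}$ is admissible iff $q_{ij}\geq 0$, while $q_{ij}\neq 0$ for a generic metric by part 2 of Theorem \ref{generic_metric}. Since admissibility of a labelled edge set (membership in some $D(\alpha)$) is by definition preserved under Lipschitz combinatorial equivalence, each sign $\sign q_{ij}$ is separately an invariant of the equivalence class, so distinct sign patterns automatically give inequivalent metrics. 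This is precisely the observation the paper itself uses in its lower bound, just localized to your chosen quadrilaterals.
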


\section{Combinatorial description of faces}

The combinatorial properties of the
polytopes $\KR(X)$ 
have been considered before \cite{MPV,Z}, and some results
of this section have been obtained there. But these
results are not
enough for the purpose of this paper, this is why
we formulate and prove here
everything that we use.

A face $\alpha$ is the intersection 
of the polytope $\KR(X)$ and some support
hyperplane defined by the equation 
$\langle\mu,f_0\rangle=1$ for appropriate
function $f_0\in \LIP(X)$. The graph
 $D(\alpha)$  may be described in terms of  $f_0$ as follows:
edge from  $x$ to $y$ exists iff $f_0(x)-f_0(y)=\rho(x,y)$.

\begin{lemma}\label{dim} Dimension of a face $\alpha$ equals $n-c$, where
$c$ is the number of connected components of $\tilde{D}(\alpha)$. 
\end{lemma}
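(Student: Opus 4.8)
The plan is to identify the face $\alpha$ with the convex hull of the defining points $e_{x,y}$ it contains, and then to reduce the computation of $\dim\alpha$ to a linear-algebra statement about the graph $\tilde D(\alpha)$. Since $\KR(X)=\operatorname{conv}\{e_{x,y}\}$ and $\alpha=\KR(X)\cap H$ for the supporting hyperplane $H=\{\mu:\langle\mu,f_0\rangle=1\}$, the standard fact that a face of $\operatorname{conv}(S)$ is the convex hull of the points of $S$ lying on it gives $\alpha=\operatorname{conv}\{e_{x,y}:\langle e_{x,y},f_0\rangle=1\}=\operatorname{conv}\{e_{x,y}:x\to y\in D(\alpha)\}$. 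Hence $\dim\alpha$ equals the dimension of the affine hull of the point set $\{e_{x,y}:x\to y\in D(\alpha)\}$, and the orientation of the edges plays no role --- only the underlying edge set of $\tilde D(\alpha)$ matters. Note that no genericity hypothesis is needed here.

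Next I would pass from the affine hull to the linear span. Each $e_{x,y}$ is a positive multiple of $\delta_x-\delta_y$, so the linear span of $\{e_{x,y}:x\to y\in D(\alpha)\}$ coincides with the span $L$ of the edge vectors $\{\delta_x-\delta_y:\{x,y\}\in\tilde D(\alpha)\}$. The crucial step is the classical incidence-rank identity $\dim L=(n+1)-c$, where $c$ is the number of connected components of $\tilde D(\alpha)$ (isolated vertices counted as components). I would prove it by fixing a spanning forest of $\tilde D(\alpha)$: its $(n+1)-c$ edge vectors are linearly independent, which follows by induction on leaves, since the coefficient of $\delta_v$ at a leaf $v$ in any vanishing linear combination is forced to be zero and the leaf can then be deleted; conversely, for any non-forest edge $\{x,y\}$ the vector $\delta_x-\delta_y$ telescopes along the unique forest path joining $x$ and $y$, hence lies in the span of the forest edges. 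Thus the forest edges form a basis of $L$ and $\dim L=(n+1)-c$.

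Finally I would account for the difference between the linear span and the affine hull. Because every $e_{x,y}$ lying on $\alpha$ satisfies $\langle e_{x,y},f_0\rangle=1$, the entire affine hull of $\alpha$ lies in the hyperplane $H$, which does not pass through the origin. For a finite point set whose affine hull avoids $0$, the affine hull has dimension exactly one less than the linear span of the same points; therefore $\dim\alpha=\dim L-1=(n+1)-c-1=n-c$, as claimed.

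I expect the main obstacle to be the rank computation $\dim L=(n+1)-c$, that is, making the spanning-forest argument fully rigorous in both directions (independence of the forest edge vectors and membership of every remaining edge vector in their span). Everything else --- identifying $\alpha$ with the convex hull of the points it contains, and the affine-versus-linear dimension correction coming from the supporting hyperplane --- is routine once this incidence-rank fact is established.
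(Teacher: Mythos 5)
Your proposal is correct and follows essentially the same route as the paper: identify the face with the convex hull of the points $e_{x,y}$ on it, compute the linear span of the corresponding edge vectors as $(n+1)-c$ via graph connectivity, and subtract one because the affine hull lies in a supporting hyperplane missing the origin. The only cosmetic difference is in proving the rank identity --- you use a spanning forest and leaf induction, while the paper notes that the edges of a connected graph on $X_1$ generate the full $(|X_1|-1)$-dimensional space via the relation $e_{x,z}\in\operatorname{span}(e_{x,y},e_{y,z})$ --- but this is the same standard fact either way.
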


\begin{proof}
Assume that $X_1\subset X$. Then the signed measures
$e_{x,y}$, $x,y\in X_1$, generate a linear space of dimension $|X_1|-1$.
Next, if $G$  is a connected non-oriented graph
on the ground set $X_1$, the signed measures
 $e_{x,y}$ for $(x,y)\in E(G)$ generate this linear space 
(since $e_{x,z}$ is a linear combination of
$e_{x,y}$ and $e_{y,z}$). 
Thus the dimension of the linear span of points $e_{x,y}$, 
which belong to the face $\alpha$, equals
$n+1-c$, where $c$ is the number
of connected components in $\tilde{D}(\alpha)$.
Dimension of affine span is lesser by 1, since affine
span of a face does not contain the origin.
\end{proof}

Next theorem describes when a given set of signed measures
belongs to the same face of the polytope $\KR(X)$.

\begin{thm}\label{cyclic_thm} Let $G=(X,E)$
be an oriented graph on the ground set of vertices
$X$. Two conditions are equivalent:

(i) The graph $G$ is admissible, i.e., 
there exists a facet $\alpha$ of the polytope $\KR(X)$ such that
$E\subset D(\alpha)$;

(ii) there exists a function $f$, Lipschitz with constant
1, such that 
$f(x)-f(y)=\rho(x,y)$ whenever $(x,y)\in E$;

(iii) for any array of oriented edges $(x_i,y_i)\in E$, $i=1,\dots,k$,
inequality holds (here $y_{k+1}=y_1$):
\begin{equation}\label{cyclic}
\sum_{i=1}^k \rho(x_i,y_i) \leqslant \sum_{i=1}^k \rho(x_i,y_{i+1});
\end{equation}

(iv) Inequality \eqref{cyclic} holds
under additional assumption that all
points $x_1$, $x_2$,\dots, $x_k$ are distinct and all points  
$y_1$, $y_2$,\dots, $y_k$ are distinct. 
\end{thm}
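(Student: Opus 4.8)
The plan is to establish the cycle of implications (i) $\Leftrightarrow$ (ii), then (ii) $\Rightarrow$ (iii) $\Rightarrow$ (iv), and finally (iv) $\Rightarrow$ (ii), so that all the real work is concentrated in producing a Lipschitz function from the weakest hypothesis (iv). The equivalence (i) $\Leftrightarrow$ (ii) is immediate from the face description recalled just before Lemma~\ref{dim}: a face $\alpha$ is cut out by a support hyperplane $\langle\mu,f_0\rangle=1$ with $f_0\in\LIP(X)$, i.e.\ $f_0$ is $1$-Lipschitz, and $e_{x,y}$ lies on $\alpha$ precisely when $\langle e_{x,y},f_0\rangle=1$, that is $f_0(x)-f_0(y)=\rho(x,y)$. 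Hence $E\subset D(\alpha)$ for some face is exactly the existence of a $1$-Lipschitz $f$ tight on every edge of $E$, which is (ii); enlarging $\alpha$ to a facet containing it (under which $D$ only grows) recovers the wording of (i). For (ii) $\Rightarrow$ (iii), given such an $f$ and edges $(x_i,y_i)\in E$ I would write $\sum_i\rho(x_i,y_i)=\sum_i\bigl(f(x_i)-f(y_i)\bigr)$; since $y_{k+1}=y_1$ one has $\sum_i f(y_i)=\sum_i f(y_{i+1})$, so the left-hand side equals $\sum_i\bigl(f(x_i)-f(y_{i+1})\bigr)\le\sum_i\rho(x_i,y_{i+1})$ by the Lipschitz bound. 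The implication (iii) $\Rightarrow$ (iv) is the trivial restriction to arrays with distinct $x_i$ and distinct $y_i$.

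To close the loop I would build $f$ as a shortest-path potential. A $1$-Lipschitz function tight on $E$ is exactly a solution of the difference constraints $f(a)-f(b)\le\rho(a,b)$ for all $a\ne b$, together with $f(y)-f(x)\le-\rho(x,y)$ for $(x,y)\in E$. Encode these in an auxiliary weighted digraph $\Gamma$ on $X$ in which every arc $a\to b$ carries weight $\rho(a,b)$, except that for $(x,y)\in E$ the arc $y\to x$ is given weight $-\rho(x,y)$. Fixing a basepoint and defining $f(v)$ to be the minimum weight of a directed walk ending at $v$ produces a function satisfying all the constraints, by the usual edge-relaxation inequality, \emph{provided} these minima are finite; and they are finite exactly when $\Gamma$ has no negative-weight directed cycle.

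The key computation is then that the minimum weight of a cycle of $\Gamma$ whose negative arcs are the $E$-edges $(x_1,y_1),\dots,(x_k,y_k)$ taken in cyclic order equals $\sum_i\rho(x_i,y_{i+1})-\sum_i\rho(x_i,y_i)$: after the special arc $y_i\to x_i$ one sits at $x_i$ and must reach the tail $y_{i+1}$ of the next special arc, and by the triangle inequality for $\rho$ the cheapest connection is the single arc $x_i\to y_{i+1}$ of weight $\rho(x_i,y_{i+1})$. Thus nonnegativity of all cycle weights is verbatim condition (iii). Since any negative closed walk decomposes into simple cycles, one of which is again negative, and a simple cycle visits pairwise distinct vertices, its special arcs have pairwise distinct tails and pairwise distinct heads; so a negative cycle forces a negative array with distinct $x_i$ and distinct $y_i$, contradicting (iv). Therefore (iv) guarantees that $\Gamma$ has no negative cycle, the potentials are finite, and the resulting $f$ witnesses (ii).

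The main obstacle is precisely this step (iv) $\Rightarrow$ (ii): one must manufacture a single global Lipschitz function, tight simultaneously on all of $E$, out of a hypothesis about finitely many cyclic inequalities. The potential construction dispatches existence cleanly, but two points need care. First, one has to fix the signs and orientations in $\Gamma$ so that its directed cycles match the arrays of (iii) exactly, including the cyclic shift $y_i\mapsto y_{i+1}$. Second, the reduction from arbitrary negative cycles to simple ones is what makes the distinctness restriction in (iv) strong enough to conclude, so that (iv) and (iii) are genuinely equivalent rather than (iv) being strictly weaker.
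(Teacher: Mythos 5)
Your proposal is correct, but your proof of the key implication (iv) $\Rightarrow$ (ii) takes a genuinely different route from the paper. The paper works in the $n$-dimensional space $F$ and applies Helly's theorem: it suffices to make any $n+1$ of the constraints $f(y)-f(x)\le\rho(x,y)$, $f(x)-f(y)\ge\rho(x,y)$ simultaneously satisfiable, and a minimal-counterexample argument (deleting points that are never, or only one-sidedly, used) forces the bad subfamily to come from a single permutation cycle $z_1\dots z_s z_1$ with signs $\varepsilon_i$, whose feasibility $\sum\varepsilon_i\rho(z_i,z_{i+1})\ge 0$ is then extracted from (iv) plus summed triangle inequalities. You instead encode the same constraint system as a difference-constraint digraph and invoke the standard Bellman--Ford dichotomy: shortest-walk potentials solve the system unless there is a negative directed cycle, and any negative closed walk contains a negative \emph{simple} cycle, whose special arcs automatically have distinct tails and distinct heads, so the triangle inequality (bounding the positive-arc segments from below by single arcs) converts it into a violation of \eqref{cyclic} in exactly the distinct form (iv). Both arguments thus reduce the general problem to the distinct-points case via the triangle inequality, but yours is constructive (the witness $f$ is an explicit shortest-path potential) and uses only elementary graph facts, while the paper's is shorter granted Helly's theorem but nonconstructive. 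Two bookkeeping points in your write-up, neither a gap: with your stated orientation (special arc $y\to x$ of weight $-\rho(x,y)$ and potentials defined by walks ending at $v$), the resulting potential is tight with the reversed sign, $f(y)-f(x)=\rho(x,y)$ on $E$, so one must output $-f$ (or flip the convention), as you yourself anticipated; and in (i) $\Leftrightarrow$ (ii) the support-hyperplane intersection is a priori only a face, so one passes to a facet containing it, which you also noted.
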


\begin{proof} A support hyperplane
to the unit ball $\KR(X)$ of the space $F^*$ 
is given by the equation $\langle\mu,f\rangle=1$, where 
$f$ --- is a function from the unit sphere
$\LIP(X)$ of the space $F$. It yields that
(i) and (ii) equivalent. Clearly (iii) implies (iv). 
If (ii) holds, then  
\eqref{cyclic} follows from 
$$
\sum_{i=1}^k \rho(x_i,y_i)=\sum_{i=1}^k \left(f(x_i)-f(y_i)\right)=\sum_{i=1}^k \left(f(x_i)-f(y_{i+1})\right)\leqslant 
\sum_{i=1}^k \rho(x_i,y_{i+1}).
$$
It remains to prove (ii) assuming (iv). 

Existence of a necessary function $f$ may be rephrased as follows: the subspaces
\begin{gather*}
\{f\in F:f(y)-f(x)\leq \rho(x,y)\}, (x,y)\in X\times X;\\ 
\{f\in F:f(x)-f(y)\geq \rho(x,y)\}, (x,y)\in E,
\end{gather*}
must have non-empty intersection.
Since $F$ is $n$-dimensional, by Helly theorem
it suffices to prove that any subfamily of at most $n+1$ 
subspaces has non-empty intersection.
Assume the contrary and consider the least counterexample: at first,
by the number $n+1$ of points in
$X$, next, by the number 
$m\leq n+1$ of subspaces with empty intersection. 
Each subspace is defined by $f(y)-f(x)\leq \pm \rho(x,y)$, sign minus
is possible if $(x,y)\in E$. Call $x$ a starting point
and $y$ an endpoint. If some point $x$ does not serve neither as a starting
point nor as an endpoint for none of our subspaces, we have
a counterexample with $X\setminus \{x\}$ instead of $X$. The same holds
if  $x$ serves only as a staring point or only as an endpoint: 
a function from $X\setminus \{x\}$ may be extended to
 $x$ so that inequalities containing  $f(x)$ become true. Thus $m=n+1$ and each point
 $x\in X$ is an endpoint for exactly one subspace and a starting point
for exactly one subspace. A map, which send each starting point to the corresponding 
endpoint is a permutation of $X$. Let $z_1\dots z_sz_1$ be one of its cycles,
than $f$ should satisfy inequalities of the form 
$$
f(z_{i+1})-f(z_i)\leq \varepsilon_i \rho(z_i,z_{i+1}),i=1,\dots,s, \varepsilon_i\in \{-1,+1\}
$$
(as usual, agree that $z_{s+1}=z_1$). Such a function exists if and only if
$\sum \varepsilon_i \rho(z_i,z_{i+1})\geq 0$. Let $A$ 
be a set of indices  $i$ for which  $\varepsilon_i=-1$, $B$ 
be a set of other indices. Then $(z_i,z_{i+1})\in E$ for $i\in A$. Let $w(i)$, for $i\in A$, denote
the (first) index preceding 
$i$ in a cycle such that  $w(i)\in A$. Function  $w$ is a cyclic
permutation of  $A$. Condition (iv) yields that
$$
\sum_{i\in A} \rho(z_i,z_{i+1})\leqslant \sum_{i\in A} \rho(z_i,z_{w(i)+1})\leqslant \sum_{i\in B} \rho(z_i,z_{i+1})
$$
(last inequality follows from several triangle inequalities
which are summed up.). This is what we need.
\end{proof}

\begin{sled}\label{cor_cyclic}
1) All signed measures
 $e_{x,y}$ lie on the boundary of the polytope $\KR(X)$.

2) If a graph $G(\alpha)$ contains edges $(x,y)$, $(y,z)$, then 
$\rho(x,z)=\rho(x,y)+\rho(y,z)$. In particular, if $\rho$ is a strong metric,
then for any vertex of the graph $G(\alpha)$ either indegree or outderee equals 0.
In this case all signed measures $e_{x,y}$ 
are vertices of the polytope $\KR(X)$.

3) A strong metric $\rho$ is generic if and only if $\tilde{D}(\alpha)$ is a forest
for any face $\alpha$ of the polytope $\KR(X)$. 

4) Assume that a metric $\rho$ is strict and for
all mutually distinct points $x_1,\dots,x_k,y_1,\dots,y_k$, $y_{k+1}=x_1$, 
inequality
$$
\sum \rho(x_i,y_i)\ne \sum \rho(x_i,y_{i+1}),
$$
takes place. Then the metric $\rho$ is generic.

\end{sled}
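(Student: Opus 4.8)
For part 1, I would feed the single-edge graph $G=\{(x,y)\}$ into the equivalence of Theorem \ref{cyclic_thm}: condition (iv) must be checked only for $k=1$, where $y_2=y_1$ makes \eqref{cyclic} read $\rho(x,y)\le\rho(x,y)$, which holds. Hence $G$ is admissible, so $e_{x,y}$ lies on a facet and thus on the boundary of $\KR(X)$.

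For part 2, if $(x,y),(y,z)\in D(\alpha)$ I take the 1-Lipschitz $f$ supplied by Theorem \ref{cyclic_thm}(ii); adding $f(x)-f(y)=\rho(x,y)$ and $f(y)-f(z)=\rho(y,z)$ gives $f(x)-f(z)=\rho(x,y)+\rho(y,z)$, while $f(x)-f(z)\le\rho(x,z)$ together with the triangle inequality forces the stated equality $\rho(x,z)=\rho(x,y)+\rho(y,z)$. For strict $\rho$ this is impossible when $x,y,z$ are distinct, and the case $x=z$ is excluded because $e_{x,y}$ and $e_{y,x}=-e_{x,y}$ cannot meet a common support hyperplane; so no vertex is both a head and a tail, i.e.\ its indegree or outdegree is $0$. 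To upgrade "boundary point" to "vertex", I would exhibit a support hyperplane meeting $\KR(X)$ only at $e_{x,y}$: the function $f_0(t)=\tfrac12\bigl(\rho(y,t)-\rho(x,t)\bigr)$ is 1-Lipschitz, attains $f_0(x)-f_0(y)=\rho(x,y)$, and for any other ordered pair $(a,b)$ satisfies $f_0(a)-f_0(b)<\rho(a,b)$ strictly, since equality would require both $\rho(y,a)=\rho(y,b)+\rho(b,a)$ and $\rho(x,b)=\rho(x,a)+\rho(a,b)$, forcing $b=y$ and $a=x$ by strictness.

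For part 3, once part 2 guarantees that every $e_{x,y}$ is a vertex of the $n$-dimensional polytope $\KR(X)$, the vertices of a face $\alpha$ are precisely the $e_{x,y}$ with $(x,y)\in D(\alpha)$, so their count is the number of edges of $\tilde D(\alpha)$. By Lemma \ref{dim} a facet has connected $\tilde D(\alpha)$ on all $n+1$ vertices, hence at least $n$ edges, with equality exactly when it is a tree; thus a facet is a simplex iff $\tilde D(\alpha)$ is a tree. Since every proper face lies in a facet and $\alpha\subseteq\beta$ yields $\tilde D(\alpha)\subseteq\tilde D(\beta)$, "every facet graph is a tree" propagates to "every face graph is a forest", which is the criterion for $\KR(X)$ to be simplicial, i.e.\ for $\rho$ to be generic.

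For part 4, I argue by contraposition through part 3: if $\rho$ were not generic, some $\tilde D(\alpha)$ would contain a cycle, and by part 2 each vertex is a source or a sink, so the cycle alternates orientation and reads $x_1\to y_1\leftarrow x_2\to\cdots\leftarrow x_1$ on $2k$ distinct vertices, with all edges $(x_i,y_i)$ and $(x_{i+1},y_i)$ lying in $D(\alpha)$. Applying the function of Theorem \ref{cyclic_thm}(ii) to both families and telescoping the cyclic sum of $f$-values gives $\sum_i\rho(x_i,y_i)=\sum_i\rho(x_{i+1},y_i)$, an exact equality of the two cyclic sums over mutually distinct points, contradicting the hypothesis; hence $\rho$ is generic. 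The main obstacle I expect is purely bookkeeping: one must relabel the sinks cyclically so that the equality produced by the cycle is matched to the normalization $\sum\rho(x_i,y_i)=\sum\rho(x_i,y_{i+1})$ of the hypothesis, and confirm that the alternating structure indeed keeps all $x_i,y_i$ distinct.
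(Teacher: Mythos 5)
Your proposal is correct and follows essentially the same route as the paper: the single-edge check of condition (iv) for part 1, the very same witness function $\tfrac12(\rho(y,t)-\rho(x,t))$ to certify vertexhood in part 2, the vertex-count-plus-Lemma \ref{dim} characterization of forests in part 3, and the alternating-cycle equality argument in part 4. The only differences are organizational --- you invoke condition (ii) where the paper plugs into \eqref{cyclic}, and you prove part 4 by contraposition through part 3 rather than directly on facets --- and you usefully fill in a detail the paper glosses, namely the $x=z$ case (edges $(x,y)$ and $(y,x)$) in part 2.
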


\begin{proof} 
1) Graph with one edge satisfies (iv).

2) Set $x_1=x,x_2=y_1=y,y_2=z$ in  \eqref{cyclic}, we get 
$\rho(x,z)\geqslant \rho(x,y)+\rho(y,z)$, thus equality takes place.
If $\rho$ is a strict metric, then the function 
$f(z)=(\rho(y,z)-\rho(x,z))/2\in \LIP(X)$ satisfies equality
 $f(a)=f(b)+\rho(a,b)$ only for $a=x$, $b=y$. Thus corresponding
support plane has unique common point $e_{x,y}$ with the
ploytope $\KR(X)$. This yields that $e_{x,y}$ is a vertex of 
$\KR(X)$.

3) Strict metric $\rho$ is generic iff the polytope $\KR(X)$
is simplicial. That is, each face $\alpha$ of dimension
 $k=\dim \alpha$ contains exactly  $k+1$ 
vertices of  $\KR(X)$. Be Lemma \ref{dim} we have $k=n-c$, 
where $c$ is a number of connected components of $\tilde{D}(\alpha)$.
By p. 2) the number of vertices of $\KR(X)$ which belong to
 $\alpha$ equals the number of edges of the graph $\tilde{D}(\alpha)$. 
So, in terms of the graph $\tilde{D}(\alpha)$ condition is the following:
sum of the number  $k+1=n-c+1$ of edges and the number of
connected components  $c$ should be equal to the number of vertices
 $n+1$. Such graphs are exactly forests.

4) Consider a facet $\alpha$ of the polytope
$\KR(X)$. We have to check that 
 $\alpha$ has exactly $n$ vertices, i.e., that the graph
$D(\alpha)$ has exactly  $n$ edges. The graph
$\tilde{D}(\alpha)$ is connected by Lemma \ref{dim}. Thus
it has at  least $n$ edges. each vertex of the graph
 $D(\alpha)$ has indegree or outdegree  0, so, any cycle in 
$\tilde{D}(\alpha)$ is alternating: $y_1x_1y_2x_2\dots y_kx_k$,
$(x_i,y_i), (x_i,y_{i+1})\in E(D(\alpha))$.
Using \eqref{cyclic} twice we get 
$\sum \rho(x_i,y_i)\leqslant \sum \rho(x_i,y_{i+1})$ and
$\sum \rho(x_i,y_{i+1})\leqslant \sum \rho(x_i,y_i)$, thus equality
takes place. This contradicts to our assumption. Therefore there are no
cycles and the non-oriented graph $\tilde{D}(\alpha)$ is a tree,
it has exactly $n$ edges, as desired.
\end{proof}

Condition (iv) of Theorem \ref{cyclic_thm} may be weakened
in the case of facets. Namely, we have

\begin{thm}\label{tree-criterion}
Let $T$ be a tree on the ground set of vertices
 $X$. Orient $T$ so that indegree or outdegree of
each vertex is 0 (there are two ways to do it.) Obtained oriented
graph $T_d$ is contained in some $D(\alpha)$ for a certain
facet $\alpha$ of the polytope
$\KR(X)$ if and only if inequality
 \eqref{cyclic} takes place for any simple path
 $y_1x_1y_2x_2\dots y_{k}x_{k}$ in the (non-oriented) tree
$T$. 
\end{thm}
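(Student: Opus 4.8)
The plan is to convert the statement into the Lipschitz condition for a single explicit function, and then to verify that condition by a local exchange argument. First I would record that the orientation $T_d$ comes from a proper $2$-colouring of the (spanning) tree $T$ into \emph{sources} and \emph{sinks}: every edge is directed from its source endpoint to its sink endpoint. Imposing $f(x)-f(y)=\rho(x,y)$ along each directed edge $x\to y$ of $T_d$ then determines a function $f\colon X\to\mathbb{R}$, uniquely up to an additive constant, since $T$ is connected and spanning. By the equivalence (i)$\Leftrightarrow$(ii) of Theorem \ref{cyclic_thm}, the oriented graph $T_d$ is contained in $D(\alpha)$ for some facet $\alpha$ if and only if this $f$ lies in $\LIP(X)$, i.e. $f(a)-f(b)\le\rho(a,b)$ for every ordered pair $a,b\in X$.

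Next I would match the simple-path inequalities with the \emph{source-to-sink} instances of this Lipschitz condition. In a properly oriented tree adjacent vertices receive opposite colours, so the unique simple path joining a sink to a source alternates and has exactly the shape $y_1x_1y_2x_2\dots y_kx_k$ of the statement; conversely every such path runs from the sink $y_1$ to the source $x_k$. Writing \eqref{cyclic} for the array $(x_i,y_i)_{i=1}^k$ of this path and substituting the edge equalities $\rho(x_i,y_i)=f(x_i)-f(y_i)$ for $i=1,\dots,k$ together with $\rho(x_i,y_{i+1})=f(x_i)-f(y_{i+1})$ for $i=1,\dots,k-1$, every interior term telescopes and the only surviving non-edge term is $\rho(x_k,y_1)$; the inequality collapses exactly to $f(x_k)-f(y_1)\le\rho(x_k,y_1)$. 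Hence the family of all simple-path inequalities is precisely the family $f(x)-f(y)\le\rho(x,y)$ ranging over sources $x$ and sinks $y$.

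It remains to prove that $f\in\LIP(X)$ if and only if $f(x)-f(y)\le\rho(x,y)$ for all sources $x$ and sinks $y$. One implication is immediate, since source-to-sink pairs are special pairs. For the converse I would run a maximiser-pushing argument on $h(a,b):=f(a)-f(b)-\rho(a,b)$. Pick a pair maximising $h$. If its first coordinate $a$ is a sink, choose a source neighbour $x$ of $a$ in $T$ (one exists because $T$ is spanning); then $f(x)-f(a)=\rho(x,a)$ and the triangle inequality $\rho(x,b)\le\rho(x,a)+\rho(a,b)$ yield $h(x,b)\ge h(a,b)$, so without loss of generality the first coordinate is a source. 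Symmetrically, if the second coordinate is a source, replacing it by a sink neighbour does not decrease $h$. We thus obtain a maximiser at a source-sink pair, where $h\le0$ by hypothesis; therefore $h\le0$ everywhere, which is exactly $f\in\LIP(X)$.

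The substantive step is this last exchange argument; the passage to $f$ and the telescoping identification of the path inequalities are routine bookkeeping. The point to watch is that, at a strictly positive maximiser, each neighbour substitution keeps the two coordinates distinct and does not lower $h$ --- both follow from positivity of $\rho$ on distinct points together with the edge equalities --- so the pushing terminates at a genuine source-sink pair rather than collapsing onto the diagonal.
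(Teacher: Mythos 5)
Your proposal is correct, and its skeleton matches the paper's: both proofs use the equivalence (i)$\Leftrightarrow$(ii) of Theorem \ref{cyclic_thm} to reduce the statement to checking that the function $f$ determined (up to a constant) by the edge equalities on the spanning tree is $1$-Lipschitz, and both rest on the telescoping computation that turns \eqref{cyclic} along an alternating path into the single inequality $f(x_k)-f(y_1)\le\rho(x_k,y_1)$. The genuine difference is how the Lipschitz bound is propagated from source--sink pairs (white--black pairs, in the paper's colouring) to arbitrary pairs. The paper runs an induction on the length of the tree path joining the two points: an endpoint of the ``wrong'' colour is peeled off using its edge equality together with the triangle inequality, and \eqref{cyclic} is invoked only in the terminal, fully alternating case. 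You instead make the equivalence ``path inequalities $\Leftrightarrow$ source-to-sink inequalities'' explicit, and then close the gap by an extremal argument: at a maximiser of $h(a,b)=f(a)-f(b)-\rho(a,b)$, the very same neighbour-substitution move (edge equality plus triangle inequality) pushes the maximiser to a source--sink pair, where positivity would contradict the hypothesis. So the one-step mechanism is identical in the two proofs; what your version buys is modularity (two clean equivalences plus an exchange lemma, no induction and no case analysis on signs), while the paper's induction does everything in one pass. You were also right to flag the two degenerate issues --- that the substitution cannot collapse the pair onto the diagonal when $h>0$, and that every vertex of the properly $2$-coloured spanning tree has a neighbour of the opposite colour --- since these are the only places where the exchange argument could leak.
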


\begin{proof} Thie condition is necessary by Theorem \ref{cyclic_thm}. 
Let's prove that it is sufficient. There exists
the unique (up to additive constant) function
$f$ on $X$ such that $f(x)-f(y)=\rho(x,y)$ for each of
 $n$ directed edges $(x,y)$ of the graph $T_d$. 
Our goal is to prove that it satisfies $f(y)-f(x)\leq \rho(x,y)$
for all vertices $x,y\in X$. Induction in the length (
number of edges) of the path
$P$ from $y$ to
$x$ in $T$. 
For paths of length 1 it follows from the definition of   $f$.
Assume that we established this for paths shorter than between
 $x$ and $y$. Then
$f(x)=f(z)\pm \rho(x,z)$, where $z$ precedes $x$ in
 $P$. On the other hand, $f(y)-f(z)\leq \rho(y,z)$  by induction proposition. Thus
\begin{equation}\label{znaki}
f(y)-f(x)=f(y)-f(z)-(f(x)-f(z))\leq \rho(y,z)\mp \rho(x,z)
\end{equation}
Since 
$\rho(y,z)-\rho(x,z)\leq \rho(x,y)$ by triangle inequality, we get desired
inequality $f(y)-f(x)\leq \rho(x,y)$ if the sign in  \eqref{znaki} is negative.
It remains to consider the case $f(x)=f(z)-\rho(x,z)$. 
Analogously we may suppose that 
$f(y)=f(x_1)+\rho(y,x_1)$, where $x_1$ follows after $y$ in $P$. So, $P$ 
has even number of vertices, 
$P=y_1x_1\dots y_{k}x_{k}$, $y=y_1$, $x=x_{k}$,
$$
f(y)-f(x)=\sum_{i=1}^k \rho(x_i,y_i)-\sum_{i=1}^{k-1} \rho(x_i,y_{i+1})\leq \rho (x_{k},y_1)=\rho(x,y)
$$ due to \eqref{cyclic}. 
\end{proof}

Now we give a criterion that $\rho$ is generic.

\begin{thm}\label{generic_metric}
1) For all distinct points $x_1,\dots,x_k$ and distinct points
$y_1,\dots,y_k$ in  $X$ the set of directed edges $(x_i,y_i),1\leqslant i\leqslant k$
is admissible if and only if
\begin{equation}\label{gp_eq}
\sum_{i=1}^k \rho(x_i,y_i)=\min_{\pi} \sum_{i=1}^k \rho(x_i,y_{\pi(i)}),
\end{equation}
where minimum is taken over all permutations $\pi$ of the set $\{1,\dots,k\}$. 

2) A strict metric $\rho$ is generic
if and only if for any $2k$
distinct points $x_1,\dots,x_k$, $y_1,\dots,y_k$ in $X$ 
minimum of a sum in RHS of  \eqref{gp_eq}
is attained for unique permutation $\pi$.
\end{thm}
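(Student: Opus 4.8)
The plan is to deduce both statements from the cyclic criterion of Theorem~\ref{cyclic_thm} together with Corollary~\ref{cor_cyclic}, reading the right-hand side of \eqref{gp_eq} as the cost of an optimal assignment (optimal transport between $\sum_i\delta_{x_i}$ and $\sum_i\delta_{y_i}$).

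For part 1), I would note that since $x_1,\dots,x_k$ are distinct and $y_1,\dots,y_k$ are distinct, the edge set $\{(x_i,y_i)\}$ and every sub-array of it automatically meet the distinctness hypotheses of condition (iv). Hence, by the equivalence (i)$\Leftrightarrow$(iv) of Theorem~\ref{cyclic_thm}, the set is admissible iff for every subset $S\subseteq\{1,\dots,k\}$ and every cyclic ordering $\sigma$ of $S$ one has $\sum_{i\in S}\rho(x_i,y_i)\leq\sum_{i\in S}\rho(x_i,y_{\sigma(i)})$. On the other hand, the identity permutation minimizes $\sum_i\rho(x_i,y_{\pi(i)})$ iff no permutation $\pi$ decreases the cost; decomposing $\pi$ into disjoint cycles and using additivity of the cost over cycles, this holds iff no single cyclic permutation on any subset decreases the cost, which is precisely the displayed family of inequalities. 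Thus admissibility is equivalent to optimality of the identity, i.e. to \eqref{gp_eq}. This step is routine once the cycle-decomposition observation is in place.

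For part 2), I would use Corollary~\ref{cor_cyclic}: for a strict metric, genericity is equivalent to $\tilde D(\alpha)$ being a forest for every face $\alpha$ (part~3), while in every $D(\alpha)$ each vertex has indegree or outdegree $0$ (part~2), so every cycle of $\tilde D(\alpha)$ is alternating, $y_1x_1y_2x_2\cdots y_kx_k$ with $(x_i,y_i),(x_i,y_{i+1})\in D(\alpha)$. Therefore $\rho$ fails to be generic iff some common face carries such an alternating cycle, i.e. iff there are distinct sources $x_1,\dots,x_k$ and distinct sinks $y_1,\dots,y_k$ for which both matchings $\{(x_i,y_i)\}$ and $\{(x_i,y_{i+1})\}$ (indices mod $k$) are admissible. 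If this happens, part~1) forces both $\sum\rho(x_i,y_i)$ and $\sum\rho(x_i,y_{i+1})$ to equal the minimum in \eqref{gp_eq}, so that minimum is attained by the two distinct permutations $\operatorname{id}$ and the cyclic shift: the minimizer is non-unique. This settles one implication.

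For the converse I would start from a non-unique minimum: after relabelling the $y$'s we may assume the identity is optimal and some other optimal permutation exists; restricting to one of its nontrivial cycles $\sigma$ on a subset $S$ and using additivity, $\sigma$ is also optimal on $S$ and $\sum_{i\in S}\rho(x_i,y_i)=\sum_{i\in S}\rho(x_i,y_{\sigma(i)})$. By part~1) the identity matching on $S$ is admissible, so Theorem~\ref{cyclic_thm}(ii) supplies a $1$-Lipschitz $f$ with $f(x_i)-f(y_i)=\rho(x_i,y_i)$ for $i\in S$. The key point is that this \emph{same} $f$ is automatically tight on the $\sigma$-edges: summing the Lipschitz inequalities $f(x_i)-f(y_{\sigma(i)})\leq\rho(x_i,y_{\sigma(i)})$ over $i\in S$ and using that $\sigma$ permutes $S$ gives $\sum_{i\in S}\bigl(f(x_i)-f(y_{\sigma(i)})\bigr)=\sum_{i\in S}\rho(x_i,y_i)=\sum_{i\in S}\rho(x_i,y_{\sigma(i)})$, whence every summand is forced to be an equality. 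Thus $f$ witnesses that the combined edge set $\{(x_i,y_i),(x_i,y_{\sigma(i)}):i\in S\}$ is admissible, so it lies in some $D(\alpha)$; the union of the two matchings is a single $2|S|$-cycle, so $\tilde D(\alpha)$ is not a forest and $\rho$ is not generic by Corollary~\ref{cor_cyclic}(3). I expect the main obstacle to be exactly this common-potential step, namely ensuring that one and the same Lipschitz function is tight on both optimal matchings; the summation argument above is what makes it go through and lets us avoid an explicit appeal to complementary slackness in Kantorovich duality. The remaining reductions (passing to a single cycle, and the distinctness bookkeeping) are routine.
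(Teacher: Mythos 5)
Your proposal is correct, and while part 1) and the easy half of part 2) coincide with the paper's argument, your proof of the hard half of part 2) takes a genuinely different route. For part 1) you do exactly what the paper does: decompose an arbitrary permutation into cycles, use additivity of the cost over cycles, and match this against condition (iv) of Theorem~\ref{cyclic_thm}; likewise, your derivation of a non-unique minimizer from a non-generic metric (alternating cycle in some $\tilde D(\alpha)$ via parts 2) and 3) of Corollary~\ref{cor_cyclic}) is the paper's. The divergence is in proving that a non-unique minimum destroys genericity. The paper, after the same reduction to the identity and a cyclic shift both being optimal, shows the full alternating cycle is admissible by checking condition (iv): any partial matching inside the cycle lies in a path obtained by deleting one edge of the cycle, and admissibility of that path is verified through Theorem~\ref{tree-criterion}, with the required path inequalities extracted from optimality of one or the other permutation. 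You bypass Theorem~\ref{tree-criterion} entirely: using (i)$\Leftrightarrow$(ii) of Theorem~\ref{cyclic_thm} you take a single $1$-Lipschitz potential $f$ tight on the identity matching, and then force tightness on the $\sigma$-edges by summing the inequalities $f(x_i)-f(y_{\sigma(i)})\leq\rho(x_i,y_{\sigma(i)})$ over $i\in S$, noting that $\sum_{i\in S}f(y_{\sigma(i)})=\sum_{i\in S}f(y_i)$ and that the two optimal costs are equal, so every inequality collapses to an equality. This complementary-slackness step is watertight precisely because $\sigma$ permutes $S$, and it puts the whole $2|S|$-cycle into one graph $D(\alpha)$, contradicting Corollary~\ref{cor_cyclic}(3). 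What each approach buys: yours is shorter and more conceptual (one common potential witnesses everything, no case analysis over paths), while the paper's stays entirely within the inequality system \eqref{cyclic} and reuses Theorem~\ref{tree-criterion}, a tool it needs again later in the deformation argument for Theorem~\ref{general_m}.
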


\begin{proof}
1) Let $C_1,C_2,\dots$ be non-empty disjoint subsets
of $\{1,\dots,k\}$ which are supports of the cycles of
$\pi$. If the set of edges
$(x_i,y_i)$ is admissible, then for each $C_j$ we have inequality
$$
\sum_{i\in C_j} \rho(x_i,y_i)\leqslant \sum_{i\in C_j} \rho(x_i,y_{\pi(i)})
$$
by \eqref{cyclic}. Summing up we get  
$$
\sum_{i=1}^k \rho(x_i,y_i)\leqslant \sum_{i=1}^k \rho(x_i,y_{\pi(i)}),
$$
this yields \eqref{gp_eq} as $\pi$ is arbitrary.
Conversely, if the set of edges $(x_i,y_i)$ is not admissible,
condition
(iv) of Theorem \ref{cyclic_thm} means that for a certain
cyclic permutation of some subset  $C\subset \{1,\dots,k\}$
the value of RHS of \eqref{gp_eq} is less than for the identical
permutation.

2) If a strong metric $\rho$ is not generic, than
some admissible graph $\tilde{D}(\alpha)$ contains a cycle.
By p.2 of Corollary \ref{cor_cyclic}, each vertex of $D(\alpha)$ 
has indegree 0 or outdegree 0, thus the vertices of this cycle
alternate and we may denote it
 $y_1x_1\dots y_kx_ky_1$, $(x_i,y_i),(x_i,y_{i+1})\in D(\alpha)$.
It means that minimum in \eqref{gp_eq} 
is obtained both for the identical permutation $\pi$ and for
$\pi(i)=i+1 \pmod k$. 

Assume now that $\rho$ is generic strict metric, but
minimum in  \eqref{gp_eq} is obtained for two different permutations.
Changing notations and considering a subset on which one of these two permutations
is a cyclic shift of another, we may consider the case when one of two
permutations is identical and another is a cyclic shift $\pi(i)=i+1 \pmod k$.
Let us show that a union of edges of the cycle $y_1x_1\dots y_kx_ky_1$ 
is admissible, by p.4 of Corollary \ref{cor_cyclic} 
this contradicts to our assumption that $\rho$ is generic.
We check condition $(iv)$. Choose few disjoint edges
from the cycle. They belong to some tree obtained from the cycle
by removing one edge. Thus it suffices to check that such a tree is
admissible. This may be checked by Theorem \ref{tree-criterion}. 
Indeed, condition of Theorem for any path in this tree
follows from the minimality of one or another permutation.
\end{proof}

\section{Stars and trees}

In this section we suppose that $\rho$ is a generic metric.

\begin{opr} A directed graph is called a \emph{star},
if there are edges coming from  one vertex to all the others,
and there are no other edges.
\emph{Constellation} is a directed graph in which all weak
connected 
components are stars.
\end{opr}

\begin{lemma}\label{ezhatnik}
Let \(V=\{v_1,\dots,v_k\}\subset X\) be a set consisting
of  $k$ points of a metric space $X$, \(p_1,\dots,p_k\) 
are non-negative integers and 
\[
k+\sum_{i=1}^k p_i = n+1.
\]
Then there exists unique admissible directed graph \(D^*\) such that
outdegrees of the vertices $v_i$ are equal to $p_i$ for $i=1,\dots,k$ 
(and their indegrees are equal to 0); indegrees of all vertices in 
$X\setminus V$ are equal to 1 (and indegrees are equal to 0). Also, the graph
\(D^*\) is a constellation and it minimizes the value of the functional
\[
F(D)=\sum_{(a,b)\in D} \rho(a,b)
\]
on the set of all graphs with described degrees.
\end{lemma}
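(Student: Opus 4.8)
The plan is to translate the degree constraints into an optimal assignment problem and then read admissibility off the optimality criteria already proved. First I would observe that, since the vertices $v_i$ have indegree $0$ and the points of $X\setminus V$ have outdegree $0$, every graph with the prescribed degrees has all its edges directed from $V$ into $X\setminus V$, with each point of $X\setminus V$ receiving exactly one edge. Such a graph is therefore the same datum as a map $\phi\colon X\setminus V\to V$ with $|\phi^{-1}(v_i)|=p_i$, and every weakly connected component is the star formed by a center $v_i$ together with its fiber $\phi^{-1}(v_i)$; this already settles the ``constellation'' assertion. Under this dictionary $F(D)=\sum_{u\in X\setminus V}\rho(\phi(u),u)$, so minimizing $F$ is exactly the transportation problem with supply $p_i$ at $v_i$ and unit demand at each point of $X\setminus V$, with cost $\rho$.

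Next I would take $D^*$ to be a cost-minimizing assignment (one exists, the feasible set being finite and nonempty) and prove it admissible through condition (iv) of Theorem \ref{cyclic_thm}. An array of edges of $D^*$ with pairwise distinct sources uses at most one edge out of each center, so it has the form $(v_{i_1},u_1),\dots,(v_{i_s},u_s)$ with distinct centers $v_{i_j}$ and $\phi(u_j)=v_{i_j}$; the targets are automatically distinct. Cyclically reassigning the leaves by $u_j\mapsto v_{i_{j-1}}$ yields again a feasible assignment, with the same multiset of used centers, so minimality of $D^*$ gives $\sum_j\rho(v_{i_j},u_j)\le\sum_j\rho(v_{i_j},u_{j+1})$, which is precisely inequality \eqref{cyclic}. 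Thus $D^*$ satisfies (iv) and is admissible, and by construction it minimizes $F$.

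For uniqueness I would show that any admissible graph with these degrees coincides with $D^*$. Given two feasible assignments $\phi_1\ne\phi_2$, form the auxiliary digraph on $V$ with an arc $\phi_1(u)\to\phi_2(u)$ for every $u$ with $\phi_1(u)\ne\phi_2(u)$; because the fibers over each $v_i$ have the common size $p_i$, indegree and outdegree agree at every vertex, so this digraph decomposes into cycles and I may extract a simple one $v_{i_1}\to\cdots\to v_{i_m}\to v_{i_1}$ with witnessing distinct leaves $u_j$ satisfying $\phi_1(u_j)=v_{i_j}$ and $\phi_2(u_j)=v_{i_{j+1}}$. Restricted to the distinct points $v_{i_1},\dots,v_{i_m}$ and $u_1,\dots,u_m$ this exhibits two distinct bijections. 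If both $\phi_1$ and $\phi_2$ are admissible, Theorem \ref{generic_metric}(1) makes each of these bijections a minimizer of the restricted assignment between $\{v_{i_j}\}$ and $\{u_j\}$, and two distinct minimizers contradict genericity through Theorem \ref{generic_metric}(2). The same cycle argument applied to a hypothetical second cost-minimizer shows the optimal assignment is unique; together these force $D^*$ to be the one and only admissible graph with the prescribed degrees.

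I expect the main obstacle to be the mismatch between condition (iv), which insists on pairwise distinct sources, and the fact that a center $v_i$ emits $p_i$ edges: the resolution is that (iv) can only ever test one leaf per center, which is exactly what makes the cyclic reassignments feasible and what lets the difference-of-assignments cycles do their work. Keeping the cyclic orientation in \eqref{cyclic} aligned with the direction of these cycles is the one place where careful bookkeeping is needed.
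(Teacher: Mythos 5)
Your proposal is correct and follows essentially the same route as the paper: existence by taking the $F$-minimizer and verifying condition (iv) of Theorem \ref{cyclic_thm} via a cyclic edge exchange, and uniqueness by extracting an alternating cycle from the symmetric difference of two admissible graphs and contradicting genericity through Theorem \ref{generic_metric}. The assignment-problem dictionary and the auxiliary digraph on $V$ are only cosmetic repackagings of the paper's argument, which walks the alternating cycle directly.
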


\begin{proof} It is clear that any graph with described degrees is a constellation.

In order to prove existence we consider the constellation \(D^*\), which minimizes 
\(F\). Choose an array of edges as in p. (iv) of Theorem
 \ref{cyclic_thm}. The graph \(D^*\) does not contain edges of the form
 \((x_{i+1},y_i)\), since degrees of endpoints are equal to 1. Thus
 \(D^* \setminus \{(x_i,y_i)\} \cup \{(x_{i+1},y_i)\}\) is a constellation again,
and the value of \(F\) is not less than for \(D^*\). 
It yields condition (iv) of Theorem
\ref{cyclic_thm}, so, \(D^*\) is indeed admissible.

Now assume that there exists yet another
constellation \(D'\) with the same degrees. Choose the vertices
\(x_1\in V, y_1 \in X \setminus V\) so that \((x_1,y_1)\in D' \setminus D^*\). 
Since degree of \(y_1\) in both  \(D^*\) and \(D'\) equals 1, 
there exist  \(x_2\in V\) such that  \((x_2,y_1)\in D^* \setminus D'\). 
Next, degree of
\(x_2\) is the same in  \(D^*\) and \(D'\). Thus
there exists a vertex  \(y_2 \in X \setminus V\) such that
 \((x_2,y_2)\in D' \setminus D^*\). This process continues until
\(x_m=x_1\) for some \(m\) . We got a cycle (strictly speaking, disconnected orientation
of a non-directed cycle), all even edges of the cycle
belong to the first constellation and all odd edges to another. Since our two constellations 
are admissible, both sets
of odd and even edges minimize the sum in
\eqref{gp_eq}. It is impossible for a generic metric by Theorem \ref{generic_metric}.
\end{proof}

The following statement is straightforward. 

\begin{utv}\label{zanudnoe} Let $T$ 
be a tree on the ground set of vertices $X$, vertices
of $T$ are properly colored in black and white colors and $u\in X$ 
is a white vertex. Consider all edges $xy$ of the tree
 $T$ such that $x$ is white and the shortest path from $u$
to $x$ does not contain the edge $xy$. Such edges form
a constellation which we denote by $H(T,u)$. 
Denote by $\Phi_u(T)$ 
the sum 
$\sum \rho(x,y)$ of lengths of all edges in $H(T,u)$.
Let $P=y_1x_1y_2x_2\dots y_{k}x_{k}$ be a simple
path in the tree  $T$, in which the vertices $x_i$ are white
Denote by  $T'$ the tree which is obtained from $T$
by the change of the edge $x_ky_k$ to $x_ky_1$. Also denote by
$w$ the vertex of $P$ which is closest to $u$. Then
$$
\Phi_u(T')-\Phi_u(T)=
\begin{cases}
\rho(x_k,y_1)-\rho(y_1,x_1)+\rho(x_1,y_2)-\dots-\rho(y_k,x_k),&\text{if $w=x_k$}\\
0,&\text{else}.
\end{cases}
$$
\end{utv}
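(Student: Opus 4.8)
The plan is to reinterpret $\Phi_u$ combinatorially and then track exactly which edges change their status under the surgery $T\mapsto T'$.

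First I would root $T$ at $u$ and reformulate the quantity. For an edge $e=\{a,b\}$ the proper $2$-coloring forces one endpoint white and one black; call the endpoint nearer to $u$ (along the unique path in $T$) the \emph{upper} endpoint. Unwinding the definition of $H(T,u)$: an edge with white endpoint $x$ lies in $H(T,u)$ precisely when the $u$-to-$x$ path avoids the edge $\{x,y\}$, i.e. precisely when $x$ is the upper endpoint. Hence $e$ contributes to $\Phi_u(T)$ iff its upper endpoint is white, and
$$\Phi_u(T)=\sum_{e}\rho(e)\cdot[\text{upper endpoint of }e\text{ is white}].$$
Since the upper/lower labelling of an edge is governed solely by the position of $u$, the whole computation reduces to comparing these labels in $T$ and in $T'$.

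Next I would set up the surgery. Deleting $x_ky_k$ splits $T$ into two components $A\ni x_k$ and $B\ni y_k$; because $y_1$ is joined to $y_k$ by the subpath $y_1x_1\cdots y_k$ of $P$, which does not use $x_ky_k$, we have $y_1\in B$, so $T'=(T\setminus x_ky_k)\cup x_ky_1$ is again a tree. I would then record the dichotomy that produces the two cases: the vertex $w$ of $P$ nearest $u$ equals $x_k$ exactly when $u\in A$, and $w\in B\setminus\{x_k\}$ exactly when $u\in B$. Indeed, in $T$ the only bridge between $A$ and $B$ is $x_ky_k$, so once $u\in A$ every $P$-vertex other than $x_k$ is separated from $u$ by $y_k$, forcing $w=x_k$, and symmetrically in the other case. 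For the case $u\in B$ (equivalently $w\neq x_k$) I would argue the difference is $0$ directly: no edge inside $B$ is disturbed and $u\in B$, so $B$ carries the same rooted structure in $T$ and $T'$; likewise $A$ is entered through $x_k$ in both trees, so the upper/lower labels of all edges inside $A$ are unchanged. The only edges that differ are the two bridges $x_ky_k$ (in $T$) and $x_ky_1$ (in $T'$); but with $u\in B$ the white vertex $x_k$ is the \emph{lower} endpoint of each, so neither contributes, whence $\Phi_u(T')=\Phi_u(T)$.

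For the case $u\in A$ (equivalently $w=x_k$) the surgery leaves $A$ untouched but changes the entry point of $B$ from $y_k$ (in $T$) to $y_1$ (in $T'$). The crucial observation — and the step I expect to be the main obstacle — is that every subtree hanging off a vertex $v$ of $P\cap B$ is entered at that same $v$ no matter which way the path is traversed, so its internal rooted structure, and the label of the edge joining it to $v$, are identical in $T$ and $T'$. Consequently only the edges of $P$ (together with the bridge) can switch labels. In $T$ the root-ward direction along $P$ points toward $x_k$, making the contributing path edges exactly the $y_ix_i$ for $1\le i\le k$ (white vertex $x_i$ on top), with total $\sum_{i=1}^k\rho(x_i,y_i)$; in $T'$ the reattached copy of $B$ is entered at $y_1$, so the root-ward direction reverses and the contributing edges become the bridge $x_ky_1$ together with the $x_iy_{i+1}$ for $1\le i\le k-1$, with total $\rho(x_k,y_1)+\sum_{i=1}^{k-1}\rho(x_i,y_{i+1})$. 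Subtracting gives $\Phi_u(T')-\Phi_u(T)=\rho(x_k,y_1)-\sum_{i=1}^{k}\rho(x_i,y_i)+\sum_{i=1}^{k-1}\rho(x_i,y_{i+1})$, which is exactly the claimed alternating expression.
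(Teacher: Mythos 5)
Your proof is correct. The paper gives no argument for this claim at all --- it is simply labelled ``straightforward'' before its statement --- so there is no authors' proof to compare against; your reformulation of $\Phi_u(T)$ as the sum of $\rho(e)$ over edges whose endpoint nearer to $u$ is white, the bridge decomposition into components $A\ni x_k$ and $B\ni y_k$, the identification $w=x_k\iff u\in A$, and the bookkeeping of which path edges switch their upper endpoint (yielding $\rho(x_k,y_1)+\sum_{i=1}^{k-1}\rho(x_i,y_{i+1})-\sum_{i=1}^{k}\rho(x_i,y_i)$ in the case $w=x_k$ and $0$ otherwise) constitute exactly the routine verification the authors left to the reader, and every step checks out.
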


\begin{thm}\label{hyperface} 
Let $X=\{v_1,\dots,v_{n+1}\}$, 
nonengative integers $p_1,\dots,p_{n+1}$ sum up to 
$\sum p_i=n$. Then there exists unique
admissible graph such that outdegree of
$v_i$ equals $p_i$ for all $i=1,\dots,n+1$. 

If we color the vertices $v_i$ with 
$p_i>0$  in white and other vertices in black,
then this graph is a directed (from white to black) tree, and for each white
vertex  $u$ it minimizes the functional
$\Phi_{u}$ defined in the statement \ref{zanudnoe}.
\end{thm}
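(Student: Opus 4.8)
The plan is to reduce the statement to a purely combinatorial problem about bipartite trees and then produce the required tree as the minimizer of a suitable functional, reading off uniqueness, the white$\to$black tree structure, and $\Phi_u$-minimality from Lemma~\ref{ezhatnik}.

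First I would settle the structure. Any graph with the prescribed out-degrees has exactly $\sum p_i=n$ edges. If it is admissible, its underlying graph is contained in some $\tilde D(\alpha)$, which is a forest by part~3 of Corollary~\ref{cor_cyclic}; a forest with $n$ edges on $n+1$ vertices is a spanning tree. By part~2 of the same corollary (here strictness of $\rho$ is used) every vertex is a source or a sink, so every edge runs from a vertex with $p_i>0$ to a vertex with $p_i=0$. Hence the colouring ``$p_i>0$ white, $p_i=0$ black'' is proper, it coincides with the bipartition of the tree, and each white vertex's out-degree equals its tree-degree $p_i$. Thus the admissible graphs with out-degrees $p_i$ are exactly the admissible directed bipartite trees, oriented white$\to$black, in which $v_i$ has degree $p_i$. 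This finite family is nonempty, since the bipartite degree sequence is realizable (all white degrees are at least $1$, and $n$ is at least the number of black vertices).

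For existence I would minimize, over this family, the functional $\Psi(T)=\sum_{u\text{ white}}\Phi_u(T)$. The decisive point is that one elementary move changes $\Psi$ by the full cyclic discrepancy of a path. Given a simple path $P=y_1x_1\cdots y_kx_k$ in $T$ (automatically alternating, $x_i$ white, $y_i$ black), let $T'$ arise by replacing the edge $x_ky_k$ with $x_ky_1$; this is again a feasible tree. By Claim~\ref{zanudnoe}, $\Phi_u(T')-\Phi_u(T)$ equals $\Delta(P):=\sum_i\rho(x_i,y_{i+1})-\sum_i\rho(x_i,y_i)$, the amount by which \eqref{cyclic} may fail, when the vertex $w$ of $P$ nearest to $u$ is $x_k$, and $0$ otherwise. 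Summing over white $u$ yields $\Psi(T')-\Psi(T)=N\cdot\Delta(P)$ with $N=|\{u\text{ white}: w=x_k\}|\geq 1$ (take $u=x_k$). Hence a minimizer $T$ of $\Psi$ satisfies $\Delta(P)\geq 0$, i.e. \eqref{cyclic}, for every such path, so by Theorem~\ref{tree-criterion} it is admissible. This is the crux, and the main obstacle is precisely identifying $\Phi_u$ (and its sum) as the right objective: an ordinary length functional would feel only one edge of the swap, whereas Claim~\ref{zanudnoe} makes a single swap encode an entire cyclic inequality.

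It remains to extract uniqueness and the structural claims from Lemma~\ref{ezhatnik}. For an admissible tree $T$ and white $u$, the sub-collection $H(T,u)$ is again admissible, and rooting $T$ at $u$ exhibits it as a constellation in which $u$ has out-degree $p_u$, each other white $x$ has out-degree $p_x-1$, and every black vertex has in-degree $1$; the out-degrees sum to $n-w+1$, and with $w$ white vertices we get $w+(n-w+1)=n+1$, which is exactly the hypothesis of Lemma~\ref{ezhatnik}. So $H(T,u)$ is the unique admissible constellation $C_u$ of that lemma, and it minimizes $F$ among all constellations with these degrees. Taking $u=x$ for each white $x$, an edge $(x,y)$ lies in $T$ iff it lies in $H(T,x)=C_x$; hence the edge set of any admissible tree is forced, giving uniqueness. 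Finally, for any feasible tree $T'$ the collection $H(T',u)$ lies in the same family, so $\Phi_u(T')=F(H(T',u))\geq F(C_u)=\Phi_u(T)$, proving that the admissible tree minimizes every $\Phi_u$. The colouring and the white$\to$black orientation were already obtained in the reduction step, so all assertions follow.
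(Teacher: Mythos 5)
Your proposal is correct and follows essentially the same route as the paper's proof: the tree/bipartite structure comes from Corollary~\ref{cor_cyclic}, existence is obtained by minimizing the sum $\sum_u \Phi_u$ over feasible trees and invoking Claim~\ref{zanudnoe} together with Theorem~\ref{tree-criterion}, and uniqueness plus $\Phi_u$-minimality are read off from the constellations $H(T,u)$ via Lemma~\ref{ezhatnik}. Your write-up merely fills in details the paper leaves implicit (non-emptiness of the family, the count $N\geq 1$, the degree computation for $H(T,u)$), so there is nothing to correct.
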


\begin{proof} P.4 of Corollary \ref{cor_cyclic} yields that any
admissible graph is a (somehow oriented) forest. Thus admissible graph
with $n$ edges is a tree.

Let us prove uniqueness of such a tree $T$. 
For any white vertex $u$ the constellation  $H(T,u)$ is admissible.
Degrees of white vertices in $H(T,u)$  depend on $u$, but not on $T$.
Lemma \ref{ezhatnik} implies that it is unique and minimizes  $\Phi_u$. 
The tree $T$ is a union of all such constellations $H(T,u)$, thus
it is at most unique.

It remains to prove the existence. Consider the tree
 $T$ with given outdegrees of white vertices,
for which the sum of functionals $\Phi_{u}$ ($u$ runs over white
vertices) is minimal possible. Let us claim that it is unique by verifying conditions
of Theorem \ref{tree-criterion}. By Claim \ref{zanudnoe},
if some path $y_1\dots x_{2k}$, where $x_i$ are white, obeys condition
of Theorem  \ref{tree-criterion}, then for the tree
$T'$ each functional $\Phi_{v_i}$ takes a value lesser or
equal than for $T$, and some functionals take strictly lesser value.
This contradicts to minimality assumption. 
\end{proof}

Theorem \ref{hyperface} implies theorem \ref{f} with $m=n$:

\begin{sled} The number of facets of the polytope $\KR(X)$ (or,
equivalently, the number of vertices of $\LIP(X)$) equals
 $\binom{2n}n$.
\end{sled}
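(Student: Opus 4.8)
The plan is to count the facets of $\KR(X)$ by invoking Theorem \ref{hyperface}, which establishes a bijection between facets and admissible trees, and in turn puts admissible trees in correspondence with the data of nonnegative outdegrees $p_1,\dots,p_{n+1}$ summing to $n$. So the core of the argument is purely enumerative: I must count the number of ways to realize each such degree sequence, weighted by the number of distinct admissible trees carrying it. First I would recall that by p.4 of Corollary \ref{cor_cyclic} every facet corresponds to an admissible spanning tree of $X$ (a tree on $n+1$ vertices has exactly $n$ edges, matching the dimension count from Lemma \ref{dim}), and that the orientation of such a tree makes every vertex a pure source or pure sink, so the outdegree vector $(p_1,\dots,p_{n+1})$ with $\sum p_i = n$ is a complete invariant in the sense of Theorem \ref{hyperface}: for \emph{each} fixed outdegree vector there is a \emph{unique} admissible tree. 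Hence the number of facets equals the number of admissible trees, and the real task is to count, over all trees $T$ on $X$, how many distinct outdegree vectors actually arise — but since Theorem \ref{hyperface} asserts existence and uniqueness for every prescribed outdegree vector, the facet count is exactly the number of valid outdegree vectors, and I must check this equals $\binom{2n}{n}$.

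The key combinatorial identity to verify is therefore that the number of admissible oriented trees equals $\binom{2n}{n}$. The cleanest route is to count oriented spanning trees of the complete graph $K_{n+1}$ whose orientation is a proper ``source/sink'' orientation — equivalently, trees with a fixed proper $2$-coloring into white (sources) and black (sinks) vertices, since each edge goes from white to black. I would use the classical fact that the number of spanning trees of $K_{n+1}$ is $(n+1)^{n-1}$ (Cayley), but that overcounts here because we do not want all labelled trees; rather Theorem \ref{hyperface} tells us the admissible tree is determined by its outdegree sequence alone. So the correct enumeration is over outdegree sequences: I count sequences $(p_1,\dots,p_{n+1})$ of nonnegative integers with $\sum p_i = n$ such that the resulting source/sink structure is tree-realizable. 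The number of weak compositions of $n$ into $n+1$ parts is $\binom{2n}{n}$, and I expect every such composition to be realizable as an admissible tree by Theorem \ref{hyperface} (which guarantees existence for \emph{any} prescribed outdegrees summing to $n$).

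Thus the argument reduces to the identity
\begin{equation}\label{comp_count}
\#\{(p_1,\dots,p_{n+1}) : p_i\geq 0,\ \textstyle\sum_i p_i = n\} = \binom{2n}{n},
\end{equation}
which is the standard stars-and-bars count: the number of weak compositions of $n$ into $n+1$ nonnegative parts is $\binom{n + (n+1) - 1}{(n+1)-1} = \binom{2n}{n}$. Since Theorem \ref{hyperface} provides a \emph{unique} admissible tree for each such outdegree vector and every admissible facet-tree has such an outdegree vector, this bijection is exact, and \eqref{comp_count} completes the count of facets of $\KR(X)$. By duality the facets of $\KR(X)$ are precisely the vertices of $\LIP(X)$, giving the second claim.

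The step I expect to be the main obstacle, conceptually, is confirming that the correspondence is genuinely a bijection and not merely a surjection with multiplicity — that is, ruling out two distinct outdegree vectors yielding the same facet, and confirming that the outdegree vector alone (without further tree data) is what Theorem \ref{hyperface} parametrizes. But Theorem \ref{hyperface} resolves exactly this: it asserts \emph{unique} existence for each prescribed outdegree vector, so distinct vectors give distinct trees and each valid vector is hit exactly once. Once that is in hand, the enumeration \eqref{comp_count} is immediate and the corollary follows.
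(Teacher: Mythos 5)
Your proposal is correct and follows essentially the same route as the paper: facets correspond to admissible trees (Lemma \ref{dim} plus genericity), Theorem \ref{hyperface} gives a bijection between admissible trees and nonnegative outdegree vectors $(p_1,\dots,p_{n+1})$ with $\sum p_i=n$, and these are counted by $\binom{2n}{n}$. The only cosmetic difference is that you count the weak compositions by stars and bars while the paper maps them to increasing sequences in $\{1,\dots,2n\}$; these are the same standard enumeration, so no substantive divergence.
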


\begin{proof} According to Lemma \ref{dim}, facets of $\KR(X)$ correspond
to admissible trees. By Theorem \ref{hyperface}
admissible trees are in bijective correspondence with 
sequences $(p_1,\dots,p_{n+1})$ of non-negative integers
which sum up to $n$. For any such sequence we may consider an increasing
sequence 
$(p_1+1,p_1+p_2+2,\dots,p_1+\dots+p_n+n)$ of numbers from
1 to  $2n$, thus their are exactly $\binom{2n}n$ of them. 
\end{proof}

\section{Rearrangements
}

The case $m=n$ of Theorem \ref{f} 
follows from Theorem \ref{hyperface}. Analogously,
the general case follows the following 

\begin{thm}\label{general_m} Let $X=\{v_1,\dots,v_{n+1}\}$, 
$p_1,\dots,p_{n+1}$ be non-negative integers
which sum up to  $m\leqslant n$. 
There exist exactly $\binom{n}{m}$ admissible graphs
such that outdegree of $v_i$ equals $p_i$ for  
all $i=1,2,\dots,n+1$.
\end{thm}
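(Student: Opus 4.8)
The plan is to recast the statement in terms of admissible forests, to show that the count depends on the degree sequence only through $m=\sum_i p_i$, and finally to evaluate it on the most degenerate sequence. By parts 2) and 3) of Corollary \ref{cor_cyclic}, genericity forces every admissible graph to be a forest in which each vertex has indegree $0$ or outdegree $0$; prescribing the outdegrees $p_1,\dots,p_{n+1}$ therefore fixes the set $W=\{v_i:p_i>0\}$ of sources, and every admissible graph with these outdegrees is an oriented forest with $m$ edges, all directed out of $W$. Write $\mathcal F(p)$ for this set and $N(p)=|\mathcal F(p)|$. I would first treat the concentrated sequence $p=(m,0,\dots,0)$: then $W=\{v_1\}$ and an admissible graph is just a star joining $v_1$ to $m$ of the other $n$ vertices. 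Every such star is admissible, since $f$ with $f(v_1)=0$ and $f(x)=-\rho(v_1,x)$ is $1$-Lipschitz by the triangle inequality and realizes all required equalities, so by Theorem \ref{cyclic_thm}(ii) we get $N(m,0,\dots,0)=\binom nm$.

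The core of the proof is to show that $N(p)$ depends only on $m$. I would establish this by a rearrangement bijection: whenever $p'=p+e_i-e_j$ with $p_j\geq1$ (one unit of outdegree moved from $v_j$ to $v_i$) I construct a bijection $\mathcal F(p)\to\mathcal F(p')$. Given $F\in\mathcal F(p)$, I select an out-edge of $v_j$ and reroute it along a suitable alternating path, so that the resulting graph has outdegrees $p'$; the path is chosen so that the underlying graph stays a forest. That the rerouted forest is again admissible is verified through the minimum-cost criterion of Theorem \ref{generic_metric}, by checking that the new edge set still attains $\min_\pi\sum_i\rho(x_i,y_{\pi(i)})$ — this is the same summation of triangle inequalities and of cyclic inequalities \eqref{cyclic} used in Lemma \ref{ezhatnik} and Claim \ref{zanudnoe}. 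The inverse map is the reverse rerouting, so the two sets are equinumerous; iterating such transfers connects any admissible sequence with $\sum_i p_i=m$ to $(m,0,\dots,0)$, giving $N(p)=\binom nm$ in all cases.

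The main obstacle is precisely this rearrangement step. The delicate point is that $\mathcal F(p)$ genuinely depends on the metric — for two competing partial matchings it is $\rho$ that selects the admissible orientation — so no metric-free bijection can exist; its well-definedness and invertibility must rest on the strict minimum guaranteed by Theorem \ref{generic_metric} and on controlling which alternating path is used, so that no oriented cycle is ever created and both endpoints lie in $\mathcal F(p)$ and $\mathcal F(p')$ respectively. A useful consistency check along the way is the identity $\sum_{(d_i):\sum d_i=n}\prod_i\binom{d_i}{p_i}=\binom{2n}{n-m}$, obtained by summing the number of subforests with outdegrees $p$ of each admissible tree (Theorem \ref{hyperface}); it is independent of $p$ but, since a forest sits inside a varying number of trees, it only pins the total over all sequences and not $N(p)$ itself — hence the rearrangement argument is really needed.

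Finally, $N(p)=\binom nm$ yields Theorem \ref{f}. By polar duality the $(n-m)$-faces of $\LIP(X)$ correspond to the $(m-1)$-faces of $\KR(X)$, which by Lemma \ref{dim} are exactly the admissible forests with $m$ edges, i.e. the graphs counted above with $\sum_i p_i=m$. Summing $N(p)$ over the $\binom{n+m}{m}$ compositions of $m$ into $n+1$ nonnegative parts gives $\binom{n+m}{m}\binom nm=\binom{n+m}{m,m,n-m}$, as claimed.
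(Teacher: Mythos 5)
Your base case $N(m,0,\dots,0)=\binom nm$ is correct, and so is the final bookkeeping (duality, Lemma \ref{dim}, and the summation over the $\binom{n+m}{m}$ outdegree sequences). But the heart of your proof --- the claim that $N(p)$ is invariant under moving one unit of outdegree, via a bijection $\mathcal F(p)\to\mathcal F(p+e_i-e_j)$ --- is never constructed. ``Select an out-edge of $v_j$ and reroute it along a suitable alternating path'' is a plan, not an argument: you do not specify the path, do not verify that the result is an admissible forest with outdegrees $p'$, and do not exhibit an inverse; you yourself flag this step as the main obstacle and then leave it unresolved. The difficulty is real and is essentially the whole content of the theorem. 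Already for $X=\{1,2,3\}$ with a strict metric, the unique element of $\mathcal F((2,0,0))$ is the star $\{(1,2),(1,3)\}$, while the unique element of $\mathcal F((1,1,0))$ is $\{(1,3),(2,3)\}$ (the alternatives $\{(1,2),(2,3)\}$, $\{(1,3),(2,1)\}$, $\{(1,2),(2,1)\}$ are all excluded by part 2 of Corollary \ref{cor_cyclic}): the transferred edge must change its head as well as its tail, and which head it acquires is dictated by the metric. In general, determining where the rerouted edges must land amounts to a relative version of the uniqueness statements of Lemma \ref{ezhatnik} and Theorem \ref{hyperface}, and you give no argument for it; the abstract existence of \emph{some} bijection is of course equivalent to the theorem itself, so nothing short of an explicit, verified construction closes the gap.

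It is worth contrasting this with the paper's route, which deforms in the orthogonal direction: it fixes the outdegree sequence and moves the \emph{metric}. First, for the single explicit metric $\rho(i,j)=1+i/j$, admissibility becomes a non-crossing condition via the rearrangement inequality, and the count $\binom nm$ is obtained for \emph{all} degree sequences at once by an elementary bijection with subsets of $\{1,\dots,n\}$. Then an arbitrary generic metric is joined to this one by a segment in the phase space $PS$ crossing one exceptional plane \eqref{ploskost} at a time, and at each crossing there is a canonical, outdegree-preserving injection $G\mapsto G'$ between the two families of admissible graphs, canonical precisely because at the wall exactly one cyclic relation degenerates. Your proposed degree transfer at a \emph{fixed} generic metric has no analogous canonical structure --- which is exactly where your construction stalls. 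If you want to keep your outline, the invariance you need should be obtained the paper's way, by deformation of the metric; but then the special metric already handles every degree sequence simultaneously and your base case becomes superfluous.
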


\begin{proof}

First of all, we prove Theorem \ref{general_m} in a special case.

Let \(X=\{1,\ldots,n+1\}\). Consider the metric
$$
\rho(i,j)=1+i/j,\, 1\leqslant i<j\leqslant n+1.
$$
For disjoint sequences $\{x_1<\dots<x_k\}\subset X$, 
$\{y_1<\dots <y_k\}\subset X$ the minimum in RHS of
 \eqref{gp_eq} is attained on the increasing permutation and only on it 
(this is known as ``rearrangement inequality'').
Thus the metric $\rho$ is generic and the graph
is admissible if and only if it does not contain
edges $(x_1,y_1), (x_2,y_2)$ for which $x_1<x_2$, $y_1>y_2$.
Assume that there are exactly $k$ positive numbers
among $p_1,\dots,p_{n+1}$, denote them
 $r_1,\dots,r_k$ in the order of corresponding points
on the real line. Denote by $Q$, $|Q|=n+1-k$, 
the set of other points of $X$. The statement of Theorem
\ref{general_m} in the case under consideration reduces to the following:

the number of subsets $A_1,\dots,A_k$ in $Q$ such that
$|A_i|=r_i$ for $i=1,\dots,k$ and $\max(A_i)\leqslant \min(A_{i+1})$ for $i=1,\dots,k-1$
equals $\binom{n}{m}=\binom{n}{\sum r_i}$.

This is clear. Indeed, without loss of generality we may suppose that $Q=\{1,2,\dots,n-k+1\}$.
Then the translates $A_1,A_2+1,A_3+2,\dots,A_k+(k-1)$
of the sets $A_1,\dots,A_k$ are disjoint and they form a set
of size $\sum r_i=m$ in $\{1,\dots,n\}$.

Now we start to deform our metric and control that the number
of admissible graphs with given outdegrees does not change.

Consider the metrics on $X$ as point of the phase space (of dimension $n(n+1)/2$:
$$
PS=\left\{f\colon X\times X \rightarrow \mathbb{R},\, f(x,y)=f(y,x),\, f(x,x)=0\,\text{for}\,x,y\in X\right\}.
$$
For any sequence $x_1,y_1,\dots,x_k,y_k$ 
of mutually distinct points in  $X$ we consider an
\emph{exceptional plane}
\begin{equation}\label{ploskost}
\sum_{i=1}^k f(x_i,y_i)=\sum_{i=1}^k f(x_i,y_{i+1}), \,\text{where}\,y_{k+1}:=y_1
\end{equation}
(some exceptional planes naturally coincide, we consider
only one in each class).

Consider two generic metrics $\rho_1,\rho_2$. Theorem \ref{tree-criterion} 
shows that while we change a metric continuously without meeting
exceptional planes, the set of admissible trees does not change. Theorem
 \ref{generic_metric} guarantees that the metric remains generic.

Replace each of the metrics $\rho_1,\rho_2$ to sufficiently close
and draw a segment between two new metrics. Almost
surely (in any reasonable sense, for example, with respect to
Lebesgue measure) this segment is not contained in no exceptional plane,
and it does not contain points which lie in at least two exceptional planes.
Thus we may suppose that when we move on this segment,
we meet at most one exceptional plane simultaneously. It remains
to prove that the number of admissible graphs from the statement of Theorem
\eqref{general_m} does not change after we intersect an
exceptional plane. Consider the moment of the 
intersection of the plane \eqref{ploskost}.
Assume that before intersection LHS of \eqref{ploskost} 
was less than RHS, and vice versa after intersection. 
Let's describe the rearrangement of the family of admissible graphs.
Admissible graphs which did not contain all $k$ edges
$(x_i,y_i)$ remain admissible (by property (iv) of Theorem \ref{cyclic_thm}). 
Graph $G$, which contains all these edges, is no longer admissible.
It corresponds to the following graph $G'$, which was not
admissible, but became admissible: \textit{for each index
$i$ such that $G$ did not contain the edge $(x_i,y_{i+1})$, add
it and remove $(x_i,y_i)$}. Note that outdegrees do not change after
such rearrangement. Let us prove that this graph $G'$ is admissible. Old graph
$G$ was contained in some admissible tree. It contained
all but one edges of the cycle $\gamma=y_1x_1\dots y_kx_ky_1$, else it would
remain admissible by Theorem \ref{tree-criterion}. 
This tree is changed by replacing one edge to another, and new tree $T'$ 
contains $G'$. Thus it suffices to check that $T'$ is admissible. 
Denote by $\rho$ the metric in the moment of rearrangement. 
There exists a function $f$ with Lipschitz constant 1 such that
$f(x)-f(y)=\rho(x,y)$ for all edges $(x,y)$ of the tree $T$ 
(condition (ii) of Theorem \ref{cyclic_thm} and passing to the limit). 
Thus the same equation holds for the only edge
of $T'\setminus T$ (i.e., for the edge of the cycle $\gamma$
which is absent in $T$: this follows from the
equation of the intersected plane). 
For other pairs of points we have a strict inequality
 $|f(x)-f(y)|<\rho(x,y)$. Thus the graph
$T\cup T'$ is admissible in the moment of rearrangement. Denote by $\rho'$
the metric after rearrangement. Consider a function
$f'$ such that $f'(x)-f'(y)=\rho'(x,y)$ for $(x,y)\in T'$. If
$(x,y)\notin T\cup T'$, then inequality $f(x)-f(y)<\rho(x,y)$ 
was strict, therefore it still holds for $f'$ and $\rho'$. For the the unique edge
of $T'\setminus T$ it also holds, since we intersected the plane.

So, we see that the number of admissible graphs with given outdegrees
does not decrease after we intersect the plane (the map $G\rightarrow G'$ is injective).
Analogously, it does not increase. Theorems \ref{general_m} and 
\ref{f} are proved.
\end{proof}

\section{Estimates of the number of types}\label{otsenki}

In this section we prove the estimates of Theorem \ref{types}.

In the previous section we considered exceptional planes. Here we need bit more
exceptional planes. Namely, consider also the planes determined by
not necessary distinct points $x_1,\dots,x_k,y_1,\dots,y_k$
(but $x$'s are distinct and $y$'s are distinct). Denote by
$N$ the number of such planes. They partition
the phase space $PS$ onto several parts (not necessary open,
for example, two points partition the line onto 5 parts: open interval,
two open rays and two points). Two functions
$f,g\in PS$ belong to the same part iff
$$
\sign I(f)=\sign I(g)
$$
for all linear functionals $I$, which determine exceptional planes.
Note that if two metrics belong to the same part, then
the families of admissible graphs for them coincide
by condition (ii) of Theorem \ref{cyclic_thm}. 
The graphs $D(\alpha)$, where  $\alpha$ is a facet of $\KR$, 
are inclusion-maximal admissible graphs.
Thus the families of facets for metrics $\rho_1,\rho_2$ coincide. 
Therefore the families of faces of lower dimension also coincide
(faces of lower dimensions are intersections of facets).
Thus the metrics $\rho_1,\rho_2$ are Lipschitz combinatorially  equivalent.
Therefore the number of types of combinatorial Lipschitz equivalence
does not exceed the number of parts defined by $N$ 
planes in the space of dimension $n(n+1)/2$. It is known that 
$N$ planes in the $d$-dimensional space determine at most
$$
2^d\binom{N}{d}+2^{d-1}\binom{N}{d-1}+\dots+\binom{N}0\leq \sum_{k=0}^d \frac{(2N)^k}{k!}\leq e(2N)^d
$$
parts. Since, obviously, $N\leq n^{2n}$, we get the upper
estimate in Theorem \ref{types}.

Now we come to the proof of the lower bound.
Fix a function $f\in PS$ such that its values
$f(x,y)$ for $x\ne y$ belong to the interval $(0,1)$ and are linearly
independent over  $\mathbb{Q}$. Consider 
$2^{n(n+1)/2}$ metrics of the type 
$\rho(x,y)=3\pm f(x,y)$ for $x\ne y$ (for all choices of signs.)
The claim is that at most $2^{o(n^2)}$ these metrics may be mutually
Lipschitz combinatorially  equivalent. It would immediately imply
the lower bound for generic metrics. Fix a metric $\rho_0$ and
estimate the number of metrics $\rho$ equivalent to
 $\rho_0$. Consider a graph on $X$, with edges corresponding to different signs
for $\rho_0$ and $\rho$. Assume that this graph
contains all edges $(x_1,y_1),(x_2,y_2),(x_1,y_2),(x_2,y_1)$
of a certain cycle of length 4. Apply
condition (iv) of Theorem \ref{cyclic_thm} for the edges $(x_1,y_1),(x_2,y_2)$.
We see that it holds
for exactly one of the metrics $\rho,\rho_0$. Therefore
$\rho,\rho_0$ are not Lipschitz combinatorially  equivalent.
Therefore the number of metrics $\rho$ which
are Lipschitz combinatorially  equivalent to $\rho_0$
does not exceed the number of graphs on $n+1$ vertices 
without 4-cycles. Such a graph contains at most $(n+1)^{3/2}$ edges (see,
for example, \cite{R}), which may be chosen by
at most $(n^2)^{(n+1)^{3/2}}=2^{o(n^2)}$ ways, as desired.

\section{Unimodular triangulations of the root polytope}\label{unimod}

Let $\rho$ be a metric on a set
$X$, $|X|=n+1$. Consider also the metric 
$\odin(x,y)=1,x\ne y$ on $X$. The vertices of the polytope
$\KR((X,\rho))$ lie on rays, which go from the origin
to the vertices of the polytope $\ROOT(X):=\KR((X,\odin))$. 
By Theorem \ref{cyclic_thm}, admissible graphs
for the metric $\odin$ are exactly all bipartite graphs
(with edges oriented from one part to another).  Therefore, if
the metric $\rho$ is strict, then the graph admissible for
$\rho$ is admissible also for $\odin$. If $\rho$ is also generic,
then for any facet of $\KR((X,\rho))$ we get a corresponding 
(under central projection) simplex belonging to
some facet of $\ROOT(X)$. Thus the central projection of the 
boundary of  $KR((X,\rho))$ onto the boundary of 
 $\ROOT(X)$ gives a triangulation of this last simplicial complex. 
Consider convex hulls of the simplices of this triangulation and the origin. 
We get a triangulation of the polytope $\ROOT(X)$ itself. 
Note two properties of these
triangulations. At first, they are 
\textit{regular}, in the sense that simplices of the triangulations are the linearity
set for the convex function: Kantorovich -- Rubinstein norm corresponding to
the metric
$\rho$. At second, they are \textit{unimodular}: 
all simplices in such a triangulation have equal volume. 
Indeed, any difference $\delta_x-\delta_y$
is expressed via analogous differences for any tree
as a linear combination with coefficients $\pm 1$; 
thus the linear maps which map simplices of 
our triangulation to each other
have integer coefficients, and their determinants 
are equal to $\pm 1$. It is known that all unimodular triangulations
of a lattice polytope have the same $f$-vector
(which may be defined invariantly via Ehrhart  polynomial, see
 \cite{S}). In turn, $f$-vectors of unimodular
triangulations of the root polytope were calculated in
 \cite{ABHP} (for concrete triangulation, as in the present paper),
and this gives another proof of Theorem
\ref{f}. However we remain a combinatorial proof,
which says more (Theorem \ref{general_m}).

From the other point of view, we may consider regular triangulations of
the polytope $\ROOT(X)$, which correspond to generic metrics, and
estimate the number of such triangulations from below as in
section \ref{otsenki}. Namely, fix a partition
$X=X_+\sqcup X_-$, $|X_+|=k$, $X_-=n+1-k$. It corresponds
to a bipartite oriented graph (edge go from $X_+$ to $X_-$). 
In turn, it corresponds to a facet  $\alpha_0$ of the polytope $\ROOT(X)$,
which is a product of simplices 
$\Delta^{k-1}\times \Delta^{n-k}$. 

Now we proceed as in section \ref{otsenki}. Fix a function
 $f$ on $X\times X$ such that its values $f(x,y)$
for $x\ne y$ belong to  $(0,1)$ and are linearly independent over
$\mathbb{Q}$. Consider all metrics of the type
$\rho(x,y)=3\pm f(x,y)$ for $x\ne y$ for all choices of signs.
For any such metric $\rho$ we get a polytope $\KR((X,\rho))$, 
for this polytope we get a regular triangulation of the corresponding facet
 $\alpha_0$ of the polytope $\ROOT(X)$. It is clear that it depends
only on the choice of signs for pairs $(x,y)$, $x\in X_+$, $y\in X_-$. Provide
an estimate for the number of choices of signs such that we get a metric
equivalent to a given metric $\rho_0$. Consider the bipartite graph
with parts $(X_+,X_-)$, in which the edges correspond to
different signs chosen for  $\rho_0$ and $\rho$.
Note that if this graph contains a  4-cycle, the pair of opposite
edges of this cycles is admissible for exactly one of two metrics $\rho,\rho_0$
(by condition
 (iv) of Theorem \ref{cyclic_thm}). Therefore the metrics
$\rho,\rho_0$ define different triangulations
of the facet $\alpha_0$. Therefore the number
of metrics  $\rho$ does not exceed the number of 4-cycle-free
spanning subgraphs of the complete bipartite
graph on  $(X_+,X_-)$. The number of edges
on such a graph does not exceed  $O(k(n-k+1)/\sqrt{n})$
(it follows from the standard argument that any pair
of vertices in one part has at most one common neighbor in another.) So we have proved

\begin{thm}\label{uni_simplex_otsenki}
Binary logarithm of the number of regular triangulations of the product
of simplices
$\Delta^{k-1}\times \Delta^{n-k}$ is at least
$$
k(n-k+1)-O(k(n-k+1)\cdot \log(n)\cdot n^{-1/2}).
$$
\end{thm}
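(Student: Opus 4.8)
The plan is to produce a family of $2^{k(n-k+1)}$ metrics, each inducing a regular triangulation of $\alpha_0$, and then to bound from above how many of them can induce the \emph{same} triangulation. I start from the observation isolated just above: the facet $\alpha_0\cong\Delta^{k-1}\times\Delta^{n-k}$ is the one attached to the complete bipartite oriented graph with all edges running from $X_+$ to $X_-$, so the simplices of the induced triangulation are exactly the admissible oriented subgraphs of this bipartite graph. The crucial point is that admissibility of such a bipartite graph is decided, through condition (iv) of Theorem~\ref{cyclic_thm}, by cyclic inequalities whose every term $\rho(x_i,y_i)$ and $\rho(x_i,y_{i+1})$ is a \emph{cross}-distance $\rho(x,y)$ with $x\in X_+$, $y\in X_-$. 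Hence the whole triangulation of $\alpha_0$ is a function of the $k(n-k+1)$ cross-distances alone.

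Next I fix, exactly as in the text above, a function $f$ with $f(x,y)\in(0,1)$ for $x\ne y$ and values linearly independent over $\mathbb{Q}$, and consider the metrics $\rho(x,y)=3\pm f(x,y)$. By the previous paragraph the triangulation of $\alpha_0$ depends only on the $k(n-k+1)$ signs chosen on cross-pairs $(x,y)$, $x\in X_+$, $y\in X_-$, so I view the construction as a map from the $2^{k(n-k+1)}$ sign patterns to regular triangulations of $\alpha_0$. The number of distinct triangulations is at least $2^{k(n-k+1)}$ divided by the largest fibre of this map, so it suffices to bound the number of sign patterns $\rho$ that produce the same triangulation as a fixed reference pattern $\rho_0$.

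To bound a fibre I attach to each competing $\rho$ its cross-difference graph $G_\rho$ on the parts $(X_+,X_-)$: the edge $xy$ lies in $G_\rho$ precisely when $\rho$ and $\rho_0$ choose opposite signs at the pair $(x,y)$. Distinct patterns $\rho$ give distinct $G_\rho$, so I only need to count the graphs $G_\rho$ that can occur. I claim no such $G_\rho$ contains a $4$-cycle. Indeed, if $x_1,y_1,x_2,y_2$ span a $4$-cycle of $G_\rho$, then the quantity $\rho(x_1,y_1)+\rho(x_2,y_2)-\rho(x_1,y_2)-\rho(x_2,y_1)$ tested by condition (iv) for the opposite pair $\{(x_1,y_1),(x_2,y_2)\}$ changes, between $\rho_0$ and $\rho$, by twice a nonzero $\pm1$-combination of the four values $f(x_i,y_j)$ (the constants $3$ cancel); by $\mathbb{Q}$-independence this combination is nonzero, so the pair $\{(x_1,y_1),(x_2,y_2)\}$ is admissible for exactly one of $\rho_0,\rho$, and their triangulations of $\alpha_0$ differ. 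Thus every fibre injects into the set of $C_4$-free (equivalently $K_{2,2}$-free) spanning subgraphs of the complete bipartite graph on $(X_+,X_-)$.

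It remains to count those subgraphs. A $K_{2,2}$-free bipartite graph with parts of sizes $k$ and $n-k+1$ has at most $E=O\!\left(k(n-k+1)/\sqrt{n}\right)$ edges: every pair of vertices in one part has at most one common neighbour, so $\sum_v\binom{d(v)}{2}$ is bounded by a binomial coefficient, and Cauchy--Schwarz turns this into a one-sided edge bound; combining the two sides and using that the larger part has size at least $n/2$ yields the stated $E$. Consequently the number of $K_{2,2}$-free subgraphs is at most $\sum_{j\le E}\binom{k(n-k+1)}{j}\le \bigl(k(n-k+1)\bigr)^{O(E)}$, whose binary logarithm is $O(E\log n)=O\!\left(k(n-k+1)\log(n)\,n^{-1/2}\right)$, since $k(n-k+1)\le n^2$. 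Dividing the $2^{k(n-k+1)}$ sign patterns by this fibre bound gives the asserted lower bound. I expect the genuinely delicate step to be the first one --- checking that admissibility inside the facet $\alpha_0$ is governed solely by cross-distances, so that sign flips on non-crossing pairs are irrelevant and the effective number of free signs is exactly $k(n-k+1)$; the edge count and the final division are then routine.
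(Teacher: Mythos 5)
Your proposal is correct and follows essentially the same route as the paper: the same perturbed metrics $\rho=3\pm f(x,y)$ with $\mathbb{Q}$-independent values, the same sign-difference bipartite graph, the same $4$-cycle exclusion via condition (iv) of Theorem~\ref{cyclic_thm}, and the same Reiman-type edge bound leading to the fibre estimate. The step you flag as delicate --- that the triangulation of $\alpha_0$ depends only on the cross-signs because condition (iv) for graphs oriented from $X_+$ to $X_-$ involves only cross-distances --- is exactly the observation the paper records as ``It is clear that it depends only on the choice of signs for pairs $(x,y)$, $x\in X_+$, $y\in X_-$,'' and your justification of it is valid.
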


For small $k$ this bound is worse than the known
bounds  \cite{Sa}.

\smallskip

We are grateful to A.~M.~Vershik for the attention and helpful
discussions.

\end{document}